\documentclass[11pt]{amsart}

\usepackage[pagebackref]{hyperref}
\usepackage{amsmath} 
\usepackage{amsthm}
\usepackage{amssymb}
\usepackage[dvipsnames]{xcolor}
\usepackage{tikz-cd}

\usepackage{tikz}
\usetikzlibrary{decorations.markings}
\usetikzlibrary{decorations.pathreplacing}
\usetikzlibrary{arrows,shapes,positioning}

\usepackage{adjustbox}
\usepackage[margin=1.5in]{geometry}
\usepackage{todonotes}
\usepackage{caption}
\usepackage{enumitem}
\usepackage{appendix}
\usepackage{float}
\usepackage{array}
\usepackage{longtable}
\usepackage{tocvsec2}
\usepackage{todonotes}

\hypersetup{citecolor=MidnightBlue,
	colorlinks=true,
	linkcolor=MidnightBlue,
	urlcolor=MidnightBlue}

\captionsetup[table]{skip=10pt}

\newtheorem*{rep@theorem}{\rep@title}
\newcommand{\newreptheorem}[2]{%
	\newenvironment{rep#1}[1]{%
		\def\rep@title{#2 \ref{##1}}%
		\begin{rep@theorem}}%
		{\end{rep@theorem}}}
\makeatother

\newtheorem{theorem}{Theorem}[section]
\newtheorem{lemma}[theorem]{Lemma}
\newtheorem{proposition}[theorem]{Proposition}

\newtheorem{corollary}[theorem]{Corollary}
\newtheorem{theorem*}{Theorem}

\theoremstyle{definition}
\newtheorem{definition}[theorem]{Definition}
\newtheorem{example}[theorem]{Example}

\newtheorem{remark}[theorem]{Remark}

\newreptheorem{theorem}{Theorem}

\DeclareMathOperator{\C}{\mathcal{C}}

\DeclareMathOperator{\Hom}{Hom}
\DeclareMathOperator{\Aut}{Aut}
\DeclareMathOperator{\End}{End}

\DeclareMathOperator{\Vect}{Vec}

\newcommand\VecG{\operatorname{\text{Vec}_G}}
\newcommand\VecGw{\operatorname{\text{Vec}_G^{\omega}}}

\newcommand\ev{\operatorname{ev}}
\newcommand\coev{\operatorname{coev}}
\newcommand{\mycomment}[1]{}

\DeclareMathOperator{\Z}{\mathbb Z}

\title{Diagramatics for cyclic pointed fusion categories}

\setcounter{tocdepth}{3}
\makeatletter
\def\l@subsection{\@tocline{2}{0pt}{2.5pc}{5pc}{}}
\def\l@subsubsection{\@tocline{2}{0pt}{2.5pc}{5pc}{}}
\makeatother

\usepackage{tocvsec2}

\begin{document}
	
	\author[A. Czenky]{Agustina Czenky}
	\address{Department of Mathematics, University of Oregon, Eugene, OR 97403, USA}
	\email{aczenky@uoregon.edu}

	\begin{abstract}
		We give a parametrization of cyclic pointed categories associated to the cyclic group of order $n$ in terms of $n$-th roots of unity. We also provide a diagramatic description of these categories by generators and relations, and use it to characterize their $2$-group of automorphisms.
	\end{abstract}
	
		\maketitle 
	\section{Introduction}
	
Let $\mathbf k$ be an algebraically closed field of characteristic zero.  A \emph{fusion category} over $\mathbf k$ is a semisimple rigid tensor category over $\mathbf k$ with finitely many isomorphism classes of simple objects,  finite dimensional hom spaces, and such that the unit object is simple \cite{ENO1}. They are connected to many areas of mathematics and mathematical physics, which makes the problem of classifying them both an important yet difficult task. At the moment, a complete classification of fusion categories seems out of reach as it would yield a classification of finite groups.
Some progress has been made towards the classification of fusion categories of certain classes; this work concerns pointed fusion categories associated to finite cyclic groups.

 An object $X$ in a fusion category $\C$ is called \emph{invertible} if its evaluation and coevaluation morphisms are isomorphisms. A fusion category $\C$ is called \emph{pointed} if every simple object is invertible. Pointed fusion categories are well-understood in terms of finite group data: any pointed fusion category is equivalent to the category $\Vect_G^{\omega}$ of finite dimensional $G$-graded vector spaces, where $G$ is a finite group and $\omega$ is a 3-cocycle on $G$ with coefficients in $\mathbf k$ codifying the associativity isomorphism. Monoidal functors between these categories are also classified: a functor $\Vect_{G_1}^{\omega_1}\to \Vect_{G_2}^{\omega_2}$ is determined by a group homomorphism $f:G_1\to G_2$  satisfying that $\omega_1$ and $f^*\omega_2$ are cohomologous, along with the data of a 2-cocycle on $G_1$ \cite[Section 2.6]{EGNO}.

In this work, the hope is to provide a more efficient way of working with pointed fusion categories associated to finite cyclic groups. To this end, we show in Section \ref{section:application to Vecn} that for the cyclic group $\mathbb Z_n$ of order $n$, pointed fusion categories associated to it have a classification which depends on a unique parameter: an $n$-th root of unity $\zeta\in \mathbf k$. We refer to the corresponding pointed category by  $\Vect_{\mathbb Z_n}^{\zeta}$. We later use this parametrization in Section \ref{sec: proof of equiv} to prove the following theorem.
\begin{theorem*}
	The category $\Vect_{\mathbb Z_n}^{\zeta}$ has a description by generators and relations with diagramatics given in Definition \ref{gens and rels}.
\end{theorem*}

With this description, we get the following universal property: 
\begin{theorem*}
	Tensor functors $\Vect_{\mathbb Z_n}^{\zeta} \to \mathcal C$ are in bijection with   pairs $(X,\lambda)$,  where $X\in \C$ and  $ \lambda: X^{\otimes n}\xrightarrow{\sim} \mathbf 1$ satisfies $\text{id}\otimes \lambda = \zeta \lambda\otimes \text{id}$.
\end{theorem*}
For a proof of this theorem, see Section \ref{section:universal property}. This gives a way of measuring when two cyclic pointed fusion associated to $\mathbb Z_n$  are equivalent, and we give an answer for the number of equivalence classes for a fixed $n$ in Section \ref{sec: 2-group}. 	Lastly, we provide a description of the automorphism 2-group of $\Vect_{\mathbb Z_n}^{\zeta}$ in Section \ref{sec: 2-group}, summarized in the following theorem. 
	
	For $j\in \mathbb Z_n$, denote by $\delta_j$ the simple object  in $\Vect_{\mathbb Z_n}^{\zeta}$ defined by $(\delta_j)_k=0$ if $k\ne j$, and $(\delta_j)_k=\mathbf k.$
	\begin{reptheorem}{maintheorem}
		The 2-group $\Aut_{\otimes}(\Vect_{\mathbb Z_n}^{\zeta})$ has:
		\begin{enumerate}
			\item Objects: monoidal automorphisms $F_{X,\lambda}:\Vect_{\mathbb Z_n}^{\zeta}\xrightarrow{\sim}  \Vect_{\mathbb Z_n}^{\zeta}$, where 
			\begin{itemize}
				\item $X\simeq \delta_j$ for some $j\in \mathbb Z_n^{\times}$ such that $\zeta^{j^2}=\zeta$,
				\item $\lambda$ is an isomorphism $\delta_j^{\otimes n} \xrightarrow{\sim} \mathbf 1,$				
				\item $F_{X, \lambda}$ is the tensor functor obtained from mapping the generator  $\delta_1\mapsto X$ and the canonical isomorphism $\delta_1^{\otimes n}\xrightarrow{\sim }\mathbf 1$ to $\lambda$.
			\end{itemize}
			\item Morphisms: Given two automorphisms $F_{X,\lambda}, F_{Y,\lambda'}: \Vect_{\mathbb Z_n}^{\zeta}\xrightarrow{\sim} \Vect_{\mathbb Z_n}^{\zeta}$, where $X\simeq \delta_j$ and $Y\simeq \delta_k$ for some $j,k\in \mathbb Z_n^{\times}$, we have that
			\begin{align*}
				\Hom_{\Aut_{\otimes}(\Vect_{\mathbb Z_n}^{\zeta})}(F_{X,\lambda}, F_{Y,\lambda'})\simeq
				\begin{cases}
					\Theta &\text{if } j= k,\\
					0 &\text{if } j \ne k,
				\end{cases}
			\end{align*}
			where $\Theta$ denotes the set of $n$-th roots of unity. Moreover, 
			\begin{align*}
				\Aut_{\Aut_{\otimes}(\Vect_{\mathbb Z_n}^{\zeta})}(F_{X,\lambda})\simeq \mathbb Z_n,
			\end{align*}
			for all $F_{X,\lambda}\in \Aut_{\otimes}(\Vect_{\mathbb Z_n}^{\zeta})$.
		\end{enumerate}
	\end{reptheorem}

	\section*{Acknowledgements}
	I want to thank my advisor Victor Ostrik for presenting this problem to me and for his guidance and advice.
	
	\section{Preliminaries}\label{section:preliminaries}
	We work over an algebraically closed field $\mathbf k$ of characteristic zero.   
	We denote the ring of non-negative integers by $\mathbb N$, and by $\mathbb Z_n$  and   $\mathbb Z_n^\times$  the rings of integers mod $n$ and its units, respectively. For a category $\C$, $\Hom_{\C}(X,Y)$ denotes the hom space from $X$ to $Y$. 
	
We recall some useful definitions regarding monoidal and tensor 
categories.	We refer the reader to~\cite{ENO1, EGNO} for a more detailed study of these topics. 
	
	\subsection{Monoidal categories}
		A \emph{monoidal category} is a collection $(\mathcal{C},\otimes,a,\textbf{1},\emph{l},\emph{r})$ where  $\mathcal{C}$ is a category, $\otimes : \mathcal{C} \times \mathcal{C} \to \mathcal{C}$ is a bifunctor, called \emph{tensor product}, $\textbf{1} \in \mathcal{C}$ is the \emph{unit object}, and $a_{X,Y,Z}:(X\otimes Y) \otimes Z \xrightarrow{\sim} X \otimes (Y \otimes Z),\ r_X:X \otimes\textbf{1} \xrightarrow{\sim} X, \ \text{and} \ l_X:\textbf{1} \otimes X \xrightarrow{\sim} X$ are natural isomorphisms, for all $X,Y,Z \in \mathcal{C}$, called \emph{associativity} and \emph{unit isomorphisms}, respectively, such that the diagrams 
	\begin{equation*}
		\begin{tikzcd}[column sep=large]
			((X \otimes Y) \otimes Z) \otimes W \arrow{r}{a_{X \otimes Y,Z,W}} \arrow{d}[left]{a_{X,Y,Z}\otimes id_W}
			&(X \otimes Y) \otimes (Z \otimes W) \arrow{r}{a_{X,Y,Z \otimes W}}
			&X\otimes (Y \otimes(Z\otimes W)) \arrow{d}{id_X \otimes a_{Y,Z,W}}\\
			(X\otimes (Y \otimes Z))\otimes W \arrow{rr}[yshift=-3ex]{a_{X,Y\otimes Z,W}}
			&&X\otimes((Y\otimes Z)\otimes W),
		\end{tikzcd}
	\end{equation*}
	and
	\begin{equation*}
		\begin{tikzcd}[column sep=huge]
			(X \otimes \textbf{1})\otimes Y \arrow{rr}{a_{X,\textbf{1},Y}} \arrow{dr}[xshift=-6ex,yshift=-2ex]{r_X\otimes id_Y}
			& &X \otimes(\textbf{1} \otimes Y) \arrow{dl}{id_X\otimes l_Y} \\
			& X \otimes Y.
		\end{tikzcd}
	\end{equation*}
	commute for all $X,Y, Z,W\in \C$.

A \emph{monoidal subcategory} of $\C$ is a monoidal category $(\mathcal{D},\otimes,a,\textbf{1},\emph{l},\emph{r})$, where $\mathcal{D} \subset \mathcal{C}$ is a full subcategory such that  $\textbf{1}\in \mathcal D$, and $\mathcal D$ is closed by tensor product. For any $X$ in a monoidal category $\C$, we can consider the \emph{monoidal subcategory generated by $X$}, which we will denote by $\langle X \rangle$, with objects $X^{\otimes n}$ for all $n\in \mathbb N$.

Given two monoidal categories $(\mathcal{C},\otimes,a,\textbf{1},\emph{l},\emph{r})$ and $(\mathcal{C'},\otimes',a',\textbf{1}',\emph{l}\ ',\emph{r}')$, a  \emph{monoidal functor}  $\C \to \C'$ is a triple $(F,J,u),$ where $F:\mathcal{C} \to \mathcal{C'}$ is a functor, $u:\textbf{1}' \xrightarrow{\sim} F(\textbf{1})$ is an isomorphism, and $J: \otimes' \circ (F \times F) \to F \circ \otimes$ is a natural isomorphism satisfying certain commutative diagrams, see for example \cite[Definition 2.4.1]{EGNO}.
A monoidal functor $F:\C \to \mathcal D$ is said to be \emph{full} (respectively,  \emph{faithful}) if  $\Hom_{\C}(X,Y) \xrightarrow{F} \Hom_{\mathcal D}(F(X),F(Y))$ is surjective (respectively,  injective)  for all $X, Y \in \C.$ We say $F$ is \emph{essentially surjective} if for all $Y\in \mathcal D$ there exists $X\in \C$ such that $F(X)\simeq Y$;  $F$ is an \emph{equivalence} if it is both essentially surjective and fully-faithful.

\subsubsection{Duals}
	Let $\C$ be a monoidal category and let $X\in \C$.  A \emph{left dual} of $X$ (if it exists) is a triple $(X^*, \ev_X, \coev_X)$ where $X^*$ is an object in $\C$ and $\ev_X:X^* \otimes X \to \textbf{1}$ and $\coev_X:\textbf{1} \to X \otimes X^*$ are morphisms in $\C$ satisfying that the following compositions yield the identity of $X$ and $X^*$, respectively:
{\small 
	\begin{equation*}
		\begin{aligned}
		\begin{tikzcd}[column sep=large]
			X \simeq \textbf{1} \otimes X \arrow{r}{\coev_X\otimes \text{id}_X} &(X\otimes X^*)\otimes X \arrow{r}{a_{X,X^*,X}}  &X \otimes (X^*\otimes X) \arrow{r}{\text{id}_X\otimes \ev_X} &X \otimes \textbf{1} \simeq X ,\\
			X^* \simeq X^*\otimes \textbf{1} \arrow{r}{\text{id}_{X^*}\otimes \coev_X} &X^*\otimes (X\otimes X^*) \arrow{r}{a^{-1}_{X^*,X,X^*}}  &(X^* \otimes X)\otimes X^* \arrow{r}{\ev_{X^*} \otimes \text{id}_{X^*}} &\textbf{1}\otimes X^* \simeq X^*.
		\end{tikzcd}
		\end{aligned}
\end{equation*}}%
We call the morphisms $\ev_X$ and $\coev_X$ \emph{evaluation} and \emph{co-evaluation}, respectively. The definition of a \emph{right dual} for $X$ is analogous. A monoidal category is said to be \emph{rigid} if every object has a left and right dual.

	An object $X$ in a rigid category  $\C$ is said to be \emph{invertible} if its evaluation  and  coevaluation  morphisms are isomorphisms. A fusion category is said to be \emph{pointed} if all simple objects are invertible.
	
	  A \emph{2-group} is a rigid monoidal category in which every object is invertible and all morphisms are isomorphisms. A $2$-group $\mathbf G$ is determined up to monoidal equivalence by a triple $(\pi_0(\mathbf G), \pi_1(\mathbf G), a),$ where $\pi_0(\mathbf G)$ is the group of isomorphism classes of objects in $\mathbf G$, $\pi_1(\mathbf G)$ is the group of automorphisms of the unit object, and $a \in H^3 (\pi_0(\mathbf G), \pi_1(\mathbf G))$ \cite{BL}.

	   Given a monoidal category $\C$, let $\Aut_{\otimes}(\C)$ be the monoidal category with objects the tensor autoequivalences of $\C$,  morphisms monoidal natural isomorphisms, and tensor product given by composition. This makes $\Aut_{\otimes}(\C)$ a $2$-group, with invariants $\pi_0(\Aut_{\otimes}(\C))$ the group of isomorphism classes of automorphisms of $\C$ and $\pi_1(\Aut_{\otimes}(\C))$ the group of monoidal natural isomorphisms of the identity functor. 
	
\subsubsection{Additive envelope}
We call a monoidal category $\mathcal C$ \emph{$\mathbf k$-linear} if for each pair of objects $X,Y\in \mathcal C$ the set $\Hom_{\mathcal C}(X,Y)$ has a \textbf{k}-module structure such that composition of morphisms is \textbf{k}-bilinear.  We say $\mathcal C$ is \emph{additive} if it is \textbf{k}-linear and for every finite sequence of objects $X_1,\dots, X_n $ in $\mathcal C$, there exists their direct sum $X_1 \oplus \dots \oplus X_n$ in $\mathcal C$. 

Given a monoidal category $\C$, we can construct its \emph{additive envelope} $\mathcal{A(C)}$ which has objects
finite formal sums $X_1 \oplus \dots \oplus X_m$ of objects in $\mathcal C$, and morphisms defined as follows. For every $X_1\oplus \dots X_m, Y_1\oplus \dots Y_n$ in $\mathcal{A(C)}$, $$\Hom_{\mathcal{A(C)}}(X_1\oplus \dots X_m, Y_1\oplus \dots \oplus Y_n) := \bigoplus\limits_{i,j} \Hom_{\mathcal C}(X_i, Y_j).$$
		Composition of morphisms is given by matrix multiplication.

\subsection{Tensor categories}

A \emph{tensor category} $\mathcal C$ is a $\textbf{k}$-linear locally finite rigid abelian monoidal category, such that $\End_{\mathcal C}(\textbf 1)\simeq \textbf k$, see e.g. \cite{SR} or \cite{EGNO}. For tensor categories $\mathcal C$ and $\mathcal D$, a \emph{tensor functor} $F:\mathcal C\to \mathcal D$ is a $\textbf{k}$-linear exact and faithful monoidal functor.

	A \emph{fusion category}  is a tensor category which is semisimple and has a finite number of isomorphism classes of simple objects.

\subsection{Pointed categories}
For a group $G$, we can consider the category $\VecG$ of finite-dimensional $G$-graded vector spaces. That is, objects are vector spaces $V$ with a decomposition $V=\bigoplus\limits_{g\in G} V_g$ and morphisms are linear transformations that preserve the grading. Then $\VecG$ is a monoidal category with tensor product given by
\begin{equation*}
(V \otimes W)_g = \bigoplus_{x,y\in G\ : \ xy=g} V_x \otimes W_y ,
\end{equation*}
unit object $\delta_1$ defined by  $(\delta_1)_1=\textbf{k}$ and $(\delta_1)_g=0$ for $g \ne 1$, and associativity given by the identity. 

More generally, given a 3-cocycle $\omega: G\times G\times G \to \textbf{k}^{\times}$, we have the monoidal category $\VecGw$ which modifies the associativity isomorphism in $\VecG$ in the following way.
		For $U,V,W \in \VecGw,$ define $a_{U,V,W}^{\omega} : (U \otimes V) \otimes W \xrightarrow{\sim} U \otimes (V \otimes W)$ as the linear extension of
		\begin{equation*}
			a_{U_g,V_h,W_l}^{\omega}:=\omega(g,h,l) \text{ id}_{U_g \otimes V_h \otimes W_l}: ( U_g\otimes V_h ) \otimes W_l \xrightarrow{\sim} U_g \otimes ( V_h\otimes W_l), 
		\end{equation*}
		for all $g,h,l \in G.$
	
	When $G$ is a finite group, $\VecGw$ is a rigid fusion category. Here the simple objects are given by $\delta_g$ defined by $(\delta_g)_h:=0$ if $h\ne g$, and $(\delta_g)_g:=\mathbf k,$ for all  $g\in G$. The dual of a simple object $\delta_g$ is $(\delta_g)^*=\delta_{g^{-1}}$, for all $g\in G$. In particular, all the simple objects in $\VecGw$ are invertible, and so $\VecGw$ is pointed. 

 It is well known that any pointed fusion category is equivalent to $\VecGw$ for some finite group $G$  and  3-cocycle $\omega$ \cite{ENO1}; cohomologically equivalent $\omega$'s give rise to equivalent categories $\VecGw$. Hence pointed fusion categories are classified in terms of finite group data.  We will assume without loss of generality that the cocycle $\omega$ is normalized, i.e., $\omega(g, 1, h) = 1$, since any 3-cocycle $\omega$ is cohomologous to a normalized one. This will help with computations, as it implies that $l_{\delta_g} = r_{\delta_g} = \text{id}_{\delta_g}$, for all $g\in G$.

	\section{Objects $X$ with an isomorphism $X^{\otimes n}\simeq \mathbf 1$}\label{section:X^n}
	
	Let $\C$ be a tensor category and $X$ an object in $\C,$ and suppose that there exists an isomorphism  $\lambda : X^{\otimes n} \xrightarrow{\sim} \textbf 1$ for some $n\in \mathbb N.$ 
	Since
	\begin{align*}
		\Hom_{\C}(X^{\otimes(n+1)},X)\simeq \Hom_{\C}(X,X)\simeq \mathbf k,
	\end{align*}
	there exists a constant $\xi \in \mathbf k$ such that the diagram
	\begin{equation} \label{main diagram}
		\begin{tikzcd}  
			X^{\otimes (n+1)} \arrow{d}[left]{\lambda  \otimes \text{id} } \arrow{r}{\xi (\text{id} \otimes \lambda)} &  X \otimes \textbf{1} \arrow{d}{r_X} \\
			\textbf{1} \otimes X \arrow{r}{l_X}  &X
		\end{tikzcd}
	\end{equation}
	commutes. 
	A priory, the constant $\xi$ depends on the choice of isomorphism $ X^{\otimes n} \xrightarrow{\sim} \textbf 1.$ We show below this is not the case.
	
	\begin{lemma}\label{lemma: independent of choice of iso}
		The constant $\xi$ in Diagram \eqref{main diagram} is independent of the choice of isomorphism $X^{\otimes n} \xrightarrow{\sim} \textbf{1}$.
	\end{lemma}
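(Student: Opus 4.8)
The plan is to show that any two isomorphisms $X^{\otimes n} \xrightarrow{\sim} \mathbf 1$ differ by a nonzero scalar, and that rescaling $\lambda$ in this way leaves the constant $\xi$ unchanged because $\xi$ is defined through a diagram in which $\lambda$ appears on both the top and the left edge.

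First I would use the hypothesis $\End_{\C}(\mathbf 1) \simeq \mathbf k$ to observe that $\Hom_{\C}(X^{\otimes n}, \mathbf 1) \simeq \Hom_{\C}(\mathbf 1, \mathbf 1) \simeq \mathbf k$, so the space of candidate isomorphisms is one-dimensional. Hence if $\lambda$ and $\lambda'$ are both isomorphisms $X^{\otimes n} \xrightarrow{\sim} \mathbf 1$, there is a scalar $c \in \mathbf k^{\times}$ with $\lambda' = c\,\lambda$. Let $\xi$ be the constant attached to $\lambda$ and $\xi'$ the constant attached to $\lambda'$ via Diagram \eqref{main diagram}.

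Next I would substitute $\lambda' = c\,\lambda$ into the defining diagram for $\xi'$ and track the scalar. The top arrow becomes $\xi'(\operatorname{id}\otimes \lambda') = \xi' c\,(\operatorname{id}\otimes \lambda)$ and the left arrow becomes $\lambda' \otimes \operatorname{id} = c\,(\lambda \otimes \operatorname{id})$; the right-hand and bottom arrows $r_X$ and $l_X$ are unchanged. Since the right and bottom arrows are fixed isomorphisms, commutativity for $\lambda'$ reads $r_X \circ \xi' c\,(\operatorname{id}\otimes\lambda) = l_X \circ c\,(\lambda\otimes\operatorname{id})$, while commutativity for $\lambda$ reads $r_X \circ \xi\,(\operatorname{id}\otimes\lambda) = l_X\circ(\lambda\otimes\operatorname{id})$. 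Multiplying the latter through by $c$ and comparing the two identities in the one-dimensional space $\Hom_{\C}(X^{\otimes(n+1)}, X) \simeq \mathbf k$ forces $\xi' c = \xi c$, hence $\xi' = \xi$ after cancelling $c \ne 0$.

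The main point to be careful about is that the scalar $c$ appears with the same total power on each side of the diagram: once through $\operatorname{id}\otimes\lambda'$ on the top and once through $\lambda'\otimes\operatorname{id}$ on the left, so the factors of $c$ cancel rather than combine. I expect the only subtlety is justifying the cancellation cleanly, namely confirming that both composites live in the one-dimensional hom space and that the two sides are genuinely equal as scalar multiples of a single nonzero morphism, so that equality of the morphisms is equivalent to equality of the scalars. This follows from $\Hom_{\C}(X^{\otimes(n+1)}, X) \simeq \mathbf k$ together with the fact that $r_X$, $l_X$, and $\operatorname{id}\otimes\lambda$ are all isomorphisms, so none of the composites is zero.
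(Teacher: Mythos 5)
Your proposal is correct and follows essentially the same route as the paper: both arguments write the two isomorphisms as scalar multiples of each other (you via $\Hom_{\C}(X^{\otimes n},\mathbf 1)\simeq\End_{\C}(\mathbf 1)\simeq\mathbf k$, the paper via Schur's Lemma), substitute into Diagram \eqref{main diagram}, and cancel the nonzero scalar using that the relevant composites are nonzero morphisms in a $\mathbf k$-linear category. Your explicit remark that the factor $c$ enters once on each side of the diagram, so it cancels rather than compounds, is exactly the mechanism underlying the paper's chain of equalities ending in $\xi_1\xi_2^{-1}=1$.
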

	\begin{proof}
		Let $\lambda_1$ and $\lambda_2$ be isomorphisms $X^{\otimes n} \xrightarrow{\sim} \textbf{1},$ and let $\xi_{1}, \xi_{2} \in \mathbf k$ be the respective constants making Diagram \eqref{main diagram} commute. That is, 
		\begin{align*}
			&& l_X(\lambda_1 \otimes \text{id}) = \xi_{1}r_X (\text{id} \otimes \lambda_1)  &&\text{and}
			&& l_X(\lambda_2 \otimes \text{id}) = \xi_{2} r_X(\text{id} \otimes \lambda_2).
		\end{align*}
	We show that $\xi_1=\xi_{2}$. By Schur's Lemma, there exists $\alpha \in \mathbf k$ such that $\lambda_1=\alpha \lambda_2$. Then
		\begin{align*}
			l_X(\lambda_1 \otimes \text{id}) &= \xi_{1}r_X (\text{id} \otimes \lambda_1)=
			\alpha \xi_{1}r_X (\text{id} \otimes \lambda_2) \\
			&=  \alpha \xi_{1} \xi_{2}^{-1}l_X (\lambda_2\otimes \text{id})=\xi_{1} \xi_{2}^{-1}l_X (\lambda_1\otimes \text{id}),
		\end{align*}
		where in the first and third equalities we are using Diagram \eqref{main diagram} for $\lambda_1$ and $\lambda_2$, respectively. It follows that $\xi_1\xi_2^{-1}=1,$ or equivalentely, $\xi_1=\xi_2.$
	\end{proof}
	
	Since $\xi$ does not depend on the isomorphism $X^{\otimes n}\xrightarrow{\sim} \mathbf 1$, we will sometimes refer to it as \emph{the constant associated to $X$}. We have the following property.
	
	\begin{lemma}\label{lemma: xi is root of unity}
		Let $\xi$ be as in Diagram $\eqref{main diagram}.$ Then $\xi$ is an $n^{\text{th}}$  root of unity.
	\end{lemma}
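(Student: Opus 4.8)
The plan is to pin down $\xi^n$ by computing a single scalar---the composite of the two ways of collapsing $X^{\otimes 2n}$ onto $\mathbf 1$ with $\lambda$---and comparing the two answers. Write $X^{\otimes 2n}=X^{\otimes n}\otimes X^{\otimes n}$ and consider the two maps $X^{\otimes 2n}\to\mathbf 1$ obtained by first collapsing the left block with $\lambda$ and then the right (call it $A$), versus first collapsing the right block and then the left (call it $B$). Using naturality of the unit constraints for the morphism $\lambda$ one has $\lambda\circ l_{X^{\otimes n}}=l_{\mathbf 1}\circ(\text{id}_{\mathbf 1}\otimes\lambda)$ and $\lambda\circ r_{X^{\otimes n}}=r_{\mathbf 1}\circ(\lambda\otimes\text{id}_{\mathbf 1})$, so that, by the interchange law, both $A$ and $B$ reduce to $\lambda\otimes\lambda$ followed by a unit isomorphism $\mathbf 1\otimes\mathbf 1\xrightarrow{\sim}\mathbf 1$. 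Since $l_{\mathbf 1}=r_{\mathbf 1}$, this gives $A=B$; in particular $A$ is an isomorphism and hence nonzero.

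The second ingredient is to show $A=\xi^n B$ by sliding the $\lambda$-cap across the strands one at a time. For $0\le i\le n$, let $H_i\colon X^{\otimes 2n}\to X^{\otimes n}$ be the map that collapses with $\lambda$ the block of $n$ consecutive tensor factors in positions $i+1,\dots,i+n$ and leaves the remaining $n$ factors (identifying the output with $X^{\otimes n}$ via the unit constraints), so that $A=\lambda\circ H_0$ and $B=\lambda\circ H_n$. Applying Diagram \eqref{main diagram}---tensored with the identity on the untouched strands---to the $n+1$ consecutive factors in positions $i+1,\dots,i+n+1$ converts the left-capping $l_X(\lambda\otimes\text{id})$ into $\xi$ times the right-capping $r_X(\text{id}\otimes\lambda)$, which is exactly the passage $H_i=\xi\,H_{i+1}$. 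Iterating $n$ times yields $H_0=\xi^n H_n$, hence $A=\xi^n B$. Combining this with $A=B\ne 0$ from the first step forces $\xi^n=1$.

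The main obstacle is the coherence bookkeeping in the sliding step: the intermediate maps $H_i$ have their surviving factors split into a disjoint pair of blocks, and one must verify that, once all the suppressed associativity and unit isomorphisms are reinstated, these maps are well defined and that each application of Diagram \eqref{main diagram} contributes \emph{exactly} one factor of $\xi$ and no spurious associator scalars. Invoking Mac Lane's coherence theorem to work up to canonical isomorphism makes this precise and reduces the whole argument to the single base relation together with the naturality used in the first paragraph.
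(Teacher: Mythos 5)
Your proof is correct and takes essentially the same route as the paper: the paper also computes the double collapse $\lambda\otimes\lambda\colon X^{\otimes 2n}\to\mathbf 1$ in two ways, once via functoriality of the tensor product (your $A=B$ step) and once by sliding the inner $\lambda$ across the $n$ strands using Diagram \eqref{main diagram} $n$ times (your $H_i=\xi H_{i+1}$, stated there as an induction on $k$). The only difference is presentational: you spell out the unit-constraint and coherence bookkeeping that the paper suppresses by ``omitting the left and right unit isomorphisms.''
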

	\begin{proof}
		Fix an isomorphism $\lambda : X^{\otimes n}  \xrightarrow{\sim} \textbf{1}$. We compute $\lambda\otimes \lambda : X^{\otimes n}\otimes X^{\otimes n}\to \textbf{1}$ in two different ways, omitting the left and right unit isomorphisms for simplicity. 
		
		On one hand, we will show by induction on $k$ that $\lambda \otimes \lambda = \xi^{k} \lambda(\text{id}^{\otimes k} \otimes \lambda \otimes \text{id}^{\otimes (n-k)}) $ for all $0\leq k \leq n.$ 
		By functoriality of the tensor product, $$ \lambda \otimes \lambda = \lambda \  (\lambda \otimes \text{id}^{\otimes n} ),$$
		so the statement is true for $k=0$. 
		Assume then that the claim holds for $k<n$. By Diagram \eqref{main diagram}, we get that
		\begin{align*}
			\text{id}^{\otimes k} \otimes \lambda \otimes \text{id}^{\otimes (n-k)}= \xi \text{id}^{\otimes k+1} \otimes \lambda \otimes \text{id}^{\otimes (n-k-1)}
		\end{align*}
		and thus using the inductive hypothesis we conclude
		\begin{align*}
			\lambda \otimes \lambda = \xi^{k} \lambda( \text{id}^{\otimes k} \otimes \lambda \otimes \text{id}^{\otimes (n-k)})=\xi^{k+1} \lambda(\text{id}^{\otimes k+1} \otimes \lambda \otimes \text{id}^{\otimes (n-k-1)}),
		\end{align*}
		as desired. Hence, we have shown that
		\begin{align}\label{2}
			\lambda \otimes \lambda= \xi^{n} \lambda(\text{id}^{\otimes n} \otimes \lambda),
		\end{align}
		for all $n\in \mathbb N$. 
		
		On the other hand, again by functoriality of the tensor product, we know that \begin{equation}\label{3}
			\lambda \otimes \lambda = \lambda (\text{id}^{ \otimes n} \otimes \lambda).
		\end{equation} Combining Equations \eqref{2} and $\eqref{3},$ we obtain $\xi^n=1.$ 
	\end{proof}
	
	For $j\in \mathbb N$, consider now the object $X^{\otimes j}\in \mathcal C$. Given an isomorphism $\lambda : X^{\otimes n} \xrightarrow{\sim} \textbf 1,$ we have an induced isomorphism $\lambda^{\otimes j} : X^{\otimes nj} \xrightarrow{\sim} \textbf 1,$ so it makes sense to wonder about the relationship between $\xi$ and the constant associated to $X^{\otimes j}$. We arrive to the following result.
	
	\begin{lemma}\label{square}
		The constant associated to $X^{\otimes j}$ is  $\xi^{j^2}.$
	\end{lemma}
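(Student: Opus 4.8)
The plan is to apply the defining Diagram \eqref{main diagram} directly to the object $Y:=X^{\otimes j}$ equipped with the isomorphism $\mu := \lambda^{\otimes j}: Y^{\otimes n} = X^{\otimes nj} \xrightarrow{\sim} \mathbf 1$, and to compute the resulting constant in terms of $\xi$. By Lemma \ref{lemma: independent of choice of iso}, the constant associated to $Y$ does not depend on the choice of isomorphism $Y^{\otimes n}\xrightarrow{\sim}\mathbf 1$, so it suffices to use this particular $\mu$ and identify the scalar $\eta$ satisfying $l_Y(\mu\otimes\mathrm{id}_Y) = \eta\, r_Y(\mathrm{id}_Y\otimes\mu)$.

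First I would unwind $\mu\otimes \mathrm{id}_Y$ and $\mathrm{id}_Y\otimes\mu$ as morphisms $X^{\otimes (nj+j)}\to X^{\otimes j}$. The key computational tool is the ``sliding'' identity already established inside the proof of Lemma \ref{lemma: xi is root of unity}, namely that moving a copy of $\lambda$ one tensor slot to the right costs a factor of $\xi$; more precisely, $\mathrm{id}^{\otimes k}\otimes\lambda\otimes\mathrm{id}^{\otimes(m-k)} = \xi\,(\mathrm{id}^{\otimes(k+1)}\otimes\lambda\otimes\mathrm{id}^{\otimes(m-k-1)})$ coming from Diagram \eqref{main diagram}. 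The morphism $\mathrm{id}_Y\otimes\mu = \mathrm{id}^{\otimes j}\otimes\lambda^{\otimes j}$ already has all its $\lambda$-factors packed at the right end, whereas $\mu\otimes\mathrm{id}_Y = \lambda^{\otimes j}\otimes\mathrm{id}^{\otimes j}$ has them at the left. The strategy is to count the total number of elementary slides of $\xi$ needed to transport each of the $j$ copies of $\lambda$ from its left-hand position past the $j$ identity strands that represent the factor $Y$ on the right.

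The bookkeeping is the heart of the matter. A single $\lambda$ acts on $n$ adjacent strands, and to move one such block past the $j$ trailing identity strands requires $j$ applications of the sliding relation. Since there are $j$ copies of $\lambda$ in $\lambda^{\otimes j}$, this naively suggests a factor $\xi^{j\cdot j}=\xi^{j^2}$; I would make this precise by iterating the sliding identity block-by-block, being careful to account for the widths of the intervening $\lambda$-blocks as well as the width-$j$ identity tail so that no slide is double-counted. Comparing the two expressions for $\lambda^{\otimes 2j}$ (as in the two-sided computation in Lemma \ref{lemma: xi is root of unity}) then isolates the constant associated to $Y$ as exactly $\xi^{j^2}$.

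The main obstacle I anticipate is getting the exponent exactly right rather than off by a lower-order term: the slide count must cleanly separate the contribution of commuting the $j$ $\lambda$-blocks past the width-$j$ identity tail from the contributions internal to rearranging $\lambda^{\otimes j}$ itself, and one must check that the latter internal rearrangements either cancel or are already absorbed by the functoriality identity \eqref{3}. Organizing the computation as a single induction on $j$ (relating the constant for $X^{\otimes (j+1)}$ to that for $X^{\otimes j}$ via one extra $\lambda$-block crossing $2j+1$ additional strands, matching the increment $(j+1)^2 - j^2 = 2j+1$) is likely the cleanest route and makes the appearance of the square transparent.
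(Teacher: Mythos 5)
Your proposal follows essentially the same route as the paper's proof: fix the induced isomorphism $\lambda^{\otimes j}$ (legitimate by Lemma \ref{lemma: independent of choice of iso}), then count slides from Diagram \eqref{main diagram} --- each of the $j$ copies of $\lambda$ crossing the $j$ identity strands costs $\xi^{j}$, giving $\xi^{j^2}$ in total. The ``internal rearrangement'' issue you anticipate does not actually arise, since functoriality of the tensor product lets the $\lambda$-blocks pass one another at no cost, which is exactly how the paper organizes the same count.
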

	
	\begin{proof}
		We want to compute the constant $\zeta$ that makes the diagram 
		\begin{equation*}
			\begin{tikzcd}[column sep=large] 
				X^{\otimes j(n+1)} \arrow{d}[left]{\lambda^{\otimes j}  \otimes \text{id} } \arrow{r}{\zeta (\text{id} \otimes \lambda^{\otimes j})} &  X^{\otimes j} \otimes \textbf{1} \arrow{d}{r_{X^{\otimes j}}} \\
				\textbf{1} \otimes X^{\otimes j} \arrow{r}{l_{X^{\otimes j}}}  &X^{\otimes j}
			\end{tikzcd}
		\end{equation*}
		commute.
		We will omit the left and right unit isomorphisms for simplicity.  
		By Diagram $\eqref{main diagram},$
		\begin{align*}
			\text{id}_{X}^{\otimes j} \otimes \lambda &= \xi \text{id}_X^{\otimes (j-1)}\otimes \lambda \otimes \text{id}_X.
		\end{align*}
		Repeating this step $j-1$ times, we obtain
		\begin{align*}
			\text{id}_{X}^{\otimes j} \otimes \lambda &= \xi^j \ \lambda \otimes \text{id}_X^{\otimes j}.
		\end{align*}
		That is, switching the order of  $\text{id}^{\otimes j}$ with $\lambda$ in the tensor product results in multiplying by $\xi^j$, and so
		\begin{align*}
			\text{id}_{X}^{\otimes j} \otimes \lambda^{\otimes j} 
			&=\xi^{j^2} \lambda^{\otimes j}\otimes \text{id}_X^j
		\end{align*}
		since we do the switching $j$ times. 
		Hence  $\zeta= \xi^{j^2}$, as desired. 
	\end{proof}


	\section{Application to cyclic pointed fusion categories}\label{section:application to Vecn}
	Let $n$ be a natural number and let  $\zeta \in \mathbf k$
	be an $n$-th root of unity. Then $\zeta$ determines a $3$-cocyle $\omega_{\zeta}:\mathbb Z_n\times \mathbb Z_n\times \mathbb Z_n \to \mathbf k^{\times}$ in the following way:
	\begin{align}\label{associativity in VecGw}
		\omega_{\zeta}(i,j,k)= \zeta^{\frac{i(j+k-\overline{j+k})}{n}}, \ \ \text{for all } i,j,k \in \mathbb Z_n,
	\end{align}
	where for an integer $m$ we denote by $\bar m$ the remainder of the division of $m$ by $n$. Moreover, all $3$-cocyles in $\mathbb Z_n$  (modulo coboundaries) are of the form $\omega_{\zeta}$ for some $n$-th root of unity $\zeta$, see for example \cite[Example 2.6.4]{EGNO}.
	We denote by $\Vect_{\mathbb Z_n}^{\zeta}$
	the pointed fusion category corresponding to
	the 3-cocycle $\omega_{\zeta}$ .

	Let $X$ be a generator of $\Vect_{\mathbb Z_n}^{\zeta}.$ Then there exists an isomorphism $X^{\otimes n} \xrightarrow{\sim} \textbf{1}$. Recall that we showed in Section \ref{section:X^n} that the constant associated to $X$ is an $n$-th root of unity, and is independent of the choice of isomorphism $X^{\otimes n} \xrightarrow{\sim} \textbf{1}$. So it makes sense to wonder if it is an invariant in the category $\Vect_{\mathbb Z_n}^{\zeta},$ and what is its relation to $\zeta.$  In fact, we have the following result.

	\begin{lemma}\label{constant for delta_1}
		Consider the generator $\delta_1$ of $\Vect^{\zeta}_{\mathbb Z_n}$. 
		Then $\zeta$ is the unique constant making the diagram 
		\begin{equation*} 
			\begin{tikzcd}  
				\delta_1^{\otimes (n+1)} \arrow{d}[left]{\sim} \arrow{r}{\sim } &  \delta_1 \otimes \mathbf{1} \arrow{d}{r_{\delta_1}} \\
				\mathbf{1} \otimes \delta_1 \arrow{r}{l_{\delta_1}}  &\delta_1
			\end{tikzcd}
		\end{equation*}
		commute.
	\end{lemma}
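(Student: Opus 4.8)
The plan is to compute directly the constant $\xi$ associated to the generator $\delta_1$ using the explicit description of the associator $\omega_\zeta$ given in Equation \eqref{associativity in VecGw}, and to verify that it equals $\zeta$. Since $\xi$ is, by Lemma \ref{lemma: independent of choice of iso}, independent of the chosen isomorphism $\lambda:\delta_1^{\otimes n}\xrightarrow{\sim}\mathbf 1$, I am free to pick the most convenient such $\lambda$; the natural choice is the canonical identification coming from the grading, under which $\delta_1^{\otimes n}$ has its nonzero graded piece in degree $\overline{1+\cdots+1}=\overline n=0$, i.e. in the unit component, so $\lambda$ is essentially the identity on the underlying one-dimensional space. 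With this choice, Diagram \eqref{main diagram} for $X=\delta_1$ reduces to a comparison of two composites built from $\lambda$, the unit isomorphisms $l_{\delta_1},r_{\delta_1}$ (which are identities by the normalization assumption), and the relevant associativity constraints.

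First I would identify precisely which instances of $\omega_\zeta$ enter. The morphism $\lambda\otimes\mathrm{id}$ versus $\mathrm{id}\otimes\lambda$ involves reassociating $n+1$ copies of $\delta_1$ so that the first $n$ factors (respectively the last $n$ factors) are grouped together and collapsed to $\mathbf 1$. The discrepancy between these two groupings is measured by a product of associators of the form $\omega_\zeta(1,j,k)$ for suitable $j,k$, and the goal is to show this product collapses to $\zeta^{1}=\zeta$. The cleanest way to organize this is to recall from the proof of Lemma \ref{lemma: xi is root of unity} that $\xi$ is exactly the scalar by which one pays to slide $\lambda$ past a single $\mathrm{id}$ factor, so it suffices to compute the single reassociation that moves the copy of $\lambda$ from acting on factors $2,\dots,n+1$ to acting on factors $1,\dots,n$. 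In the graded language this is governed by $\omega_\zeta$ evaluated at the degree-$1$ element on the left against the block of degree summing appropriately, and by the cocycle formula $\omega_\zeta(i,j,k)=\zeta^{i(j+k-\overline{j+k})/n}$ the only surviving contribution is the one carrying a factor of $\zeta^{1\cdot n/n}=\zeta$.

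Concretely, I would compute $\omega_\zeta(1,n-1,1)=\zeta^{\,1\cdot((n-1)+1-\overline{n})/n}=\zeta^{\,n/n}=\zeta$, and check that all the other associators appearing in the chain of reassociations are trivial because their exponent $i(j+k-\overline{j+k})/n$ vanishes (this happens precisely when $j+k<n$, so no ``carry'' occurs). This isolates $\zeta$ as the single nontrivial factor and hence identifies $\xi=\zeta$. Uniqueness of the constant making the diagram commute is immediate from the one-dimensionality of the relevant $\Hom$ space, exactly as in the setup preceding Lemma \ref{lemma: independent of choice of iso}.

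The main obstacle I anticipate is bookkeeping: making sure the order of reassociations and the arguments of $\omega_\zeta$ are tracked correctly, and in particular confirming that exactly one associator in the telescoping product is nontrivial while the rest are $1$. A secondary subtlety is handling the reduction modulo $n$ in the exponent $\overline{j+k}$ correctly at the single step where the partial sum of degrees first reaches $n$ and wraps to $0$; this wrap is precisely the source of the factor $\zeta$, so I would state that step carefully rather than absorb it into ``by a direct computation.'' Everything else is routine given the normalization $l_{\delta_1}=r_{\delta_1}=\mathrm{id}_{\delta_1}$ and Lemma \ref{lemma: independent of choice of iso}.
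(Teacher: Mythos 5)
Your proposal is correct and takes essentially the same route as the paper's proof: fix the canonical isomorphism $\lambda$ (invoking Lemma \ref{lemma: independent of choice of iso}), reduce the constant to the telescoping product of associators $\prod_{k=1}^{n-1}\omega_{\zeta}(1,k,1)$ arising from the chain of reassociations between $\delta_1^{\otimes n}\otimes \delta_1$ and $\delta_1\otimes \delta_1^{\otimes n}$, and observe that every factor is trivial except the single wrap-around term $\omega_{\zeta}(1,n-1,1)=\zeta$. The paper carries out exactly this computation, so there is no gap to address.
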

	
	\begin{proof}
		Choose  $\lambda$ to be the canonical isomorphism $\delta_1^{\otimes n}\xrightarrow{\sim} \textbf{1}.$ Then the constant for which the diagram above commutes is given by the associativity map from $\delta_1^{\otimes n} \otimes \delta_1 \to \delta_1\otimes \delta_1^{\otimes n}.$ That is, it is determined by the following composition 
		of associativity maps:
		{\small
			\begin{align*}
				(\dots ((\delta_1 \otimes \delta_1)\otimes \delta_1)\otimes\dots )\otimes \delta_1 &\xrightarrow{a_{\delta_1,\delta_1,\delta_1}\otimes \text{id}^{\otimes(n-2)}}
				(\dots (\delta_1 \otimes (\delta_1\otimes \delta_1))\otimes\dots )\otimes \delta_1 \\
				&\xrightarrow{a_{\delta_1,\delta_1^{\otimes 2},\delta_1}\otimes \text{id}^{\otimes(n-3)}}
				(\dots (\delta_1 \otimes ((\delta_1\otimes \delta_1)\otimes \delta_1)\otimes\dots )\otimes \delta_1\\
				& \ \ \ \ \vdots\\
				&\xrightarrow{a_{\delta_1,\delta_1^{\otimes (n-1)},\delta_1}}
				\delta_1 \otimes ((\delta_1\otimes \delta_1)\otimes \delta_1)\otimes\dots )\otimes \delta_1).
		\end{align*}}%
		Recall that $a_{\delta_1, \delta_1^{\otimes k}, \delta_1}$ is determined by multiplication by $\omega(1,k,1),$ see Equation \eqref{associativity in VecGw}. Hence the series of compositions above is given by multiplication by the constant
		\begin{align*}
			\prod_{k=1}^{n-1} \omega_{\zeta}(1,k,1) =  \prod_{k=1}^{n-1} \zeta^{\frac{(k+1-\overline{k+1})}{n}}=\zeta,
		\end{align*}
		as desired. \end{proof}
	
	\begin{remark}\label{remark: square in Vec}
		Let $j\in \Z_n$ such that $\gcd(j,n)=1$, so that $\delta_j$ is a generator of $\Vect_{\mathbb Z_n}^{\zeta}.$ By Lemma \ref{square}, we know that the constant associated to $\delta_j$ is $\zeta^{j^2}.$
	\end{remark}

	\section{Diagramatics for $\Vect_{\mathbb Z_n}^{\zeta}$}\label{sec:diagramatics}
	
	In this section we find a description of the category $\Vect_{\mathbb Z_n}^{\zeta}$ by generators and relations, and give the corresponding diagramatics.

	\begin{definition}\label{gens and rels}
		For $\zeta \in \mathbf k$ and $n\in \mathbb N,$  let $\mathcal D_{\zeta,n}$ be the category defined as follows:
		\begin{itemize}
			\item Objects are collections of $k$ side-by-side points: 
			\begin{equation*}
				{
					\begin{tikzpicture}[scale=.2]
						\draw [blue, fill=black] (2,0) circle (0.2cm and 0.2cm);
						\draw [blue, fill=black] (4,0) circle (0.2cm and 0.2cm);
						\node at (6,1) {\tiny $k$};
						\node at (6,0) {$\dots$};
						\draw [blue, fill=black] (8,0) circle (0.2cm and 0.2cm);
					\end{tikzpicture} 
				}\ \ ,
			\end{equation*}
			for all $k\in \mathbb N$, with monoidal structure given by concatenation.
			
			\item Morphisms: consider the set of diagrams $\mathcal S$ constructed by composition (vertical stacking) and tensor product (concatenation)  out of the generating diagrams below:
						\begin{align}\label{generators}
				\scalebox{0.8}{
					\begin{tikzpicture}[scale=.2]
						\draw [very thick] (8,1) to (8,-5);
						\draw [blue, fill=black] (8,-5) circle (0.2cm and 0.2cm);
						\draw [blue, fill=black] (8,1) circle (0.2cm and 0.2cm);
						\node at (12,-2.5) {= $\text{id}_1$,};
					\end{tikzpicture} 
				}
				\scalebox{0.8}{
					\begin{tikzpicture}[scale=.2]
						\draw [very thick] (-8,-6) to [out=-90, looseness=1, in=-90] (4,-6.5);
						\draw [very thick] (-8,-4) to (-8,-6);
						\draw [blue, fill=black] (-8,-4) circle (0.2cm and 0.2cm);
						\draw [very thick] (-6,-4) to (-6,-8.8);
						\draw [blue, fill=black] (-6,-4) circle (0.2cm and 0.2cm);
						\draw [very thick] (-4,-4) to (-4,-9.4);
						\draw [blue, fill=black] (-4,-4) circle (0.2cm and 0.2cm);
						\node at (-2,-6) {\tiny $n$};
						\node at (-2,-7) {$\dots$};
						\draw [very thick] (0,-4) to (0,-9.8);
						\draw [blue, fill=black] (0,-4) circle (0.2cm and 0.2cm);
						\draw [very thick] (2,-4) to (2,-9.2);
						\draw [blue, fill=black] (2,-4) circle (0.2cm and 0.2cm);
						\draw [very thick] (4,-4) to (4,-6.5);
						\draw [blue, fill=black] (4,-4) circle (0.2cm and 0.2cm);
						\node at (7,-7) {\text{ and }};
				\end{tikzpicture} }
				\scalebox{0.8}{
					\begin{tikzpicture}[scale=.2]
						\draw [very thick] (-8,-6) to [out=90, looseness=1, in=90] (4,-6.5);
						\draw [very thick] (-8,-6) to (-8,-8);
						\draw [blue, fill=black] (-8,-8) circle (0.2cm and 0.2cm);
						\draw [very thick] (-6,-3.5) to (-6,-8);
						\draw [blue, fill=black] (-6,-8) circle (0.2cm and 0.2cm);
						\draw [very thick] (-4,-2.8) to (-4,-8);
						\draw [blue, fill=black] (-4,-8) circle (0.2cm and 0.2cm);
						\node at (-2,-5) {\tiny $n$};
						\node at (-2,-6) {$\dots$};
						\draw [very thick] (0,-3) to (0,-8);
						\draw [blue, fill=black] (0,-8) circle (0.2cm and 0.2cm);
						\draw [very thick] (2,-3.7) to (2,-8);
						\draw [blue, fill=black] (2,-8) circle (0.2cm and 0.2cm);
						\draw [very thick] (4,-6.4) to (4,-8);
						\draw [blue, fill=black] (4,-8) circle (0.2cm and 0.2cm);
						\node at (6,-6) {.};
					\end{tikzpicture} 
				}
			\end{align}
		For $k,l\in \mathbb N$, let $\mathcal S_{k,l}$  be the subset of $\mathcal S$ of diagrams going from $k$ points to $l$ points (read from top to bottom). Define $\Hom_{\mathcal D_{\zeta,n}}(k,l)$ to be the $\mathbf k$-linear span of $\mathcal S_{k,l}$, modulo the relations below:
					\item Relations:
			\begin{align}\label{relations 1}
				\scalebox{0.8}{
					\begin{tikzpicture}[scale=.2]
						\draw [very thick] (-8,-6) to [out=-90, looseness=1, in=-90] (4,-6.5);
						\draw [very thick] (-8,-4) to (-8,-6);
						\draw [very thick] (-6,-4) to (-6,-8.8);
						\draw [very thick] (-4,-4) to (-4,-9.4);
						\draw [blue, fill=black] (-8,-5) circle (0.2cm and 0.2cm);
						\draw [blue, fill=black] (-6,-5) circle (0.2cm and 0.2cm);
						\draw [blue, fill=black] (-4,-5) circle (0.2cm and 0.2cm);
						\draw [blue, fill=black] (4,-5) circle (0.2cm and 0.2cm);
						\draw [blue, fill=black] (0,-5) circle (0.2cm and 0.2cm);
						\draw [blue, fill=black] (2,-5) circle (0.2cm and 0.2cm);
						\draw [very thick] (0,-4) to (0,-9.8);
						\draw [very thick] (2,-4) to (2,-9.2);
						\draw [very thick] (4,-4) to (4,-6.5);
						\draw [very thick] (-8,-4) to [out=90, looseness=1, in=90] (4,-4.5);
						\draw [very thick] (-6,-1.3) to (-6,-8);
						\draw [very thick] (-6,-6) to (-6,-8);
						\draw [very thick] (-4,-0.8) to (-4,-8);
						\draw [very thick] (-4,-6) to (-4,-8);
						\node at (-2,-5) {\tiny $n$};
						\node at (-2,-6) {$\dots$};
						\draw [very thick] (0,-0.9) to (0,-8);
						\draw [very thick] (0,-6) to (0,-8);
						\draw [very thick] (2,-1.8) to (2,-8);
						\draw [very thick] (2,-6) to (2,-8);
						\node at (8,-5.5) {= $\text{id}_0$,};
				\end{tikzpicture} } 
			\end{align}
				\begin{align}\label{relations 3}
				\scalebox{0.8}{
					\begin{tikzpicture}[scale=.2]
						\draw [very thick] (-8,1) to [out=-90, looseness=1, in=-90] (4,1.5);
						\draw [very thick] (-8,3) to (-8,1);
						\draw [very thick] (-6,3) to (-6,-1.6);
						\draw [very thick] (-4,3) to (-4,-2.1);
						\draw [blue, fill=black] (-8,3) circle (0.2cm and 0.2cm);
						\draw [blue, fill=black] (-6,3) circle (0.2cm and 0.2cm);
						\draw [blue, fill=black] (-4,3) circle (0.2cm and 0.2cm);
						\node at (-2,1) {\tiny $n$};
						\node at (-2,0) {$\dots$};
						\draw [very thick] (0,3) to (0,-2);
						\draw [very thick] (2,3) to (2,-1.3);
						\draw [very thick] (4,3) to (4,0.9);
						\draw [blue, fill=black] (0,3) circle (0.2cm and 0.2cm);
						\draw [blue, fill=black] (2,3) circle (0.2cm and 0.2cm);
						\draw [blue, fill=black] (4,3) circle (0.2cm and 0.2cm);
						\draw [very thick] (-8,-6) to [out=90, looseness=1, in=90] (4,-6.5);
						\draw [very thick] (-8,-6) to (-8,-8);
						\draw [very thick] (-6,-3.5) to (-6,-8);
						\draw [very thick] (-4,-2.8) to (-4,-8);
						\draw [blue, fill=black] (-8,-8) circle (0.2cm and 0.2cm);
						\draw [blue, fill=black] (-6,-8) circle (0.2cm and 0.2cm);
						\draw [blue, fill=black] (-4,-8) circle (0.2cm and 0.2cm);
						\node at (-2,-5) {\tiny $n$};
						\node at (-2,-6) {$\dots$};
						\draw [very thick] (0,-3) to (0,-8);
						\draw [very thick] (2,-3.7) to (2,-8);
						\draw [very thick] (4,-6) to (4,-8);
						\draw [blue, fill=black] (0,-8) circle (0.2cm and 0.2cm);
						\draw [blue, fill=black] (2,-8) circle (0.2cm and 0.2cm);
						\draw [blue, fill=black] (4,-8) circle (0.2cm and 0.2cm);
						\node at (6,-2.5) {= };
						\draw [very thick] (8,1) to (8,-5);
						\draw [very thick] (10,1) to (10,-5);
						\draw [blue, fill=black] (8,-5) circle (0.2cm and 0.2cm);
						\draw [blue, fill=black] (8,1) circle (0.2cm and 0.2cm);
						\draw [blue, fill=black] (10,-5) circle (0.2cm and 0.2cm);
						\draw [blue, fill=black] (10,1) circle (0.2cm and 0.2cm);
						\node at (12,-2) {\tiny $n$};
						\node at (12,-3) {$\dots$};
						\draw [very thick] (14,1) to (14,-5);
						\draw [blue, fill=black] (14,-5) circle (0.2cm and 0.2cm);
						\draw [blue, fill=black] (14,1) circle (0.2cm and 0.2cm);
						\node at (16,-2.5) {,};
					\end{tikzpicture} 
				} 
			\end{align} and
			\begin{align}\label{relations 2}
				\scalebox{0.8}{
					\begin{tikzpicture}[scale=.2]
						\draw [very thick] (-8,1) to [out=-90, looseness=1, in=-90] (4,1.5);
						\draw [very thick] (-8,3) to (-8,1);
						\draw [very thick] (-6,3) to (-6,-1.6);
						\draw [very thick] (-4,3) to (-4,-2.1);
						\draw [blue, fill=black] (-8,3) circle (0.2cm and 0.2cm);
						\draw [blue, fill=black] (-6,3) circle (0.2cm and 0.2cm);
						\draw [blue, fill=black] (-4,3) circle (0.2cm and 0.2cm);
						\node at (-2,1) {\tiny $n$};
						\node at (-2,0) {$\dots$};
						\draw [very thick] (0,3) to (0,-2);
						\draw [very thick] (2,3) to (2,-1.3);
						\draw [very thick] (4,3) to (4,0.9);
						\draw [blue, fill=black] (0,3) circle (0.2cm and 0.2cm);
						\draw [blue, fill=black] (2,3) circle (0.2cm and 0.2cm);
						\draw [blue, fill=black] (4,3) circle (0.2cm and 0.2cm);
						\draw [very thick] (24,3) to (24,-2.5);
						\draw [very thick] (-10,3) to (-10,-2.5);
						\draw [blue, fill=black] (-10,3) circle (0.2cm and 0.2cm);
						\draw [blue, fill=black] (-10,-2.5) circle (0.2cm and 0.2cm);
						\node at (7,0) {= $\zeta$ };
						\draw [very thick] (10,1) to [out=-90, looseness=1, in=-90] (22,1.5);
						\draw [very thick] (10,3) to (10,1);
						\draw [very thick] (12,3) to (12,-1.6);
						\draw [very thick] (14,3) to (14,-2.1);
						\draw [blue, fill=black] (10,3) circle (0.2cm and 0.2cm);
						\draw [blue, fill=black] (12,3) circle (0.2cm and 0.2cm);
						\draw [blue, fill=black] (14,3) circle (0.2cm and 0.2cm);
						\node at (16,1) {\tiny $n$};
						\node at (16,0) {$\dots$};
						\draw [very thick] (18,3) to (18,-2);
						\draw [very thick] (20,3) to (20,-1.3);
						\draw [very thick] (22,3) to (22,0.9);
						\draw [blue, fill=black] (18,3) circle (0.2cm and 0.2cm);
						\draw [blue, fill=black] (22,3) circle (0.2cm and 0.2cm);
						\draw [blue, fill=black] (20,3) circle (0.2cm and 0.2cm);
						\draw [blue, fill=black] (24,3) circle (0.2cm and 0.2cm);
						\draw [blue, fill=black] (24,-2.5) circle (0.2cm and 0.2cm);
						\node at (26,0) {.};
					\end{tikzpicture} 
				}
			\end{align}
		\end{itemize}
	\end{definition}
	
	\begin{remark}
		Since the non-identity generating maps are in $\Hom_{\mathcal D_{\xi}}(0,n)$ and $\Hom_{\mathcal D_{\xi}}(n,0)$, respectively, it follows that $\Hom_{\mathcal D_{\xi}}(k,l)=0$ if $k$ and $l$ are not congruent modulo $n$. Also, for all $k\in \mathbb N$ we have an isomorphism $k\xrightarrow{\sim} k+n$ given by 
			\begin{align*}
			\scalebox{0.8}{
				\begin{tikzpicture}[scale=.2]
					\draw [very thick] (-8,1) to [out=-90, looseness=1, in=-90] (4,1.5);
					\draw [very thick] (-8,3) to (-8,1);
					\draw [very thick] (-6,3) to (-6,-1.6);
					\draw [very thick] (-4,3) to (-4,-2.1);
					\draw [blue, fill=black] (-8,3) circle (0.2cm and 0.2cm);
					\draw [blue, fill=black] (-6,3) circle (0.2cm and 0.2cm);
					\draw [blue, fill=black] (-4,3) circle (0.2cm and 0.2cm);
					\node at (-2,1) {\tiny $n$};
					\node at (-2,0) {$\dots$};
					\draw [very thick] (0,3) to (0,-2);
					\draw [very thick] (2,3) to (2,-1.3);
					\draw [very thick] (4,3) to (4,0.9);
					\draw [blue, fill=black] (0,3) circle (0.2cm and 0.2cm);
					\draw [blue, fill=black] (2,3) circle (0.2cm and 0.2cm);
					\draw [blue, fill=black] (4,3) circle (0.2cm and 0.2cm);
					\draw [very thick] (-10,3) to (-10,-2.5);
						\node at (-12,1) {\tiny $k$};
					\node at (-12,0) {$\dots$};
									\node at (6,1) {.};
								\draw [very thick] (-14,3) to (-14,-2.5);
					\draw [blue, fill=black] (-10,3) circle (0.2cm and 0.2cm);
					\draw [blue, fill=black] (-10,-2.5) circle (0.2cm and 0.2cm);
					\draw [blue, fill=black] (-14,3) circle (0.2cm and 0.2cm);
					\draw [blue, fill=black] (-14,-2.5) circle (0.2cm and 0.2cm);
				\end{tikzpicture} 
			}
		\end{align*}
	\end{remark}

\begin{example}
	For $n=2$, we have two choices $\zeta=\pm1$. Hence the generating morphisms for $\mathcal D_{\pm 1,n}$ are
		\begin{align*}
		\scalebox{0.8}{
			\begin{tikzpicture}[scale=.2]
				\draw [very thick] (8,1) to (8,-5);
				\draw [blue, fill=black] (8,-5) circle (0.2cm and 0.2cm);
				\draw [blue, fill=black] (8,1) circle (0.2cm and 0.2cm);
				\node at (12,-2.5) {= $\text{id}_1$,};
			\end{tikzpicture} 
		}
		\scalebox{0.8}{
			\begin{tikzpicture}[scale=.2]
				\draw [very thick] (-8,-6) to [out=-90, looseness=1.5, in=-90] (-1,-6.5);
				\draw [very thick] (-8,-4) to (-8,-6);
				\draw [blue, fill=black] (-8,-4) circle (0.2cm and 0.2cm);
				\draw [very thick] (-1,-4) to (-1,-6.5);
				\draw [blue, fill=black] (-1,-4) circle (0.2cm and 0.2cm);
				\node at (2,-7) {\text{ and }};
		\end{tikzpicture} }
		\scalebox{0.8}{
			\begin{tikzpicture}[scale=.2]
				\draw [very thick] (-8,-6) to [out=90, looseness=1.5, in=90] (-1,-6.5);
				\draw [very thick] (-8,-6) to (-8,-8);
				\draw [blue, fill=black] (-8,-8) circle (0.2cm and 0.2cm);
				\draw [very thick] (-1,-6.4) to (-1,-8);
				\draw [blue, fill=black] (-1,-8) circle (0.2cm and 0.2cm);
				\node at (2,-6) {,};
			\end{tikzpicture} 
		}
	\end{align*}
	with relations
		\begin{align*}
		\scalebox{0.8}{
			\begin{tikzpicture}[scale=.2]
				\draw [very thick] (-8,-6) to [out=-90, looseness=1.5, in=-90] (-1,-6.5);
				\draw [very thick] (-8,-4) to (-8,-6);
				\draw [blue, fill=black] (-8,-5) circle (0.2cm and 0.2cm);
				\draw [blue, fill=black] (-1,-5) circle (0.2cm and 0.2cm);
				\draw [very thick] (-1,-4) to (-1,-6.5);
				\draw [very thick] (-8,-4) to [out=90, looseness=1.5, in=90] (-1,-4.5);
				\node at (4,-5.5) {= $\text{id}_0$,};
		\end{tikzpicture} } 
		\scalebox{0.8}{
			\begin{tikzpicture}[scale=.2]
				\draw [very thick] (-8,1) to [out=-90, looseness=1.5, in=-90] (-1,1.5);
				\draw [very thick] (-8,3) to (-8,1);
				\draw [blue, fill=black] (-8,3) circle (0.2cm and 0.2cm);
				\draw [very thick] (-1,3) to (-1,1.2);
				\draw [blue, fill=black] (-1,3) circle (0.2cm and 0.2cm);
				\draw [very thick] (-8,-6) to [out=90, looseness=1.5, in=90] (-1,-6.5);
				\draw [very thick] (-8,-6) to (-8,-8);
				\draw [very thick] (-1,-6.5) to (-1,-8);
				\draw [blue, fill=black] (-1,-8) circle (0.2cm and 0.2cm);
				\draw [blue, fill=black] (-8,-8) circle (0.2cm and 0.2cm);
				\node at (2,-2.5) {= };
				\draw [very thick] (4,1) to (4,-5);
				\draw [very thick] (6,1) to (6,-5);
				\draw [blue, fill=black] (4,-5) circle (0.2cm and 0.2cm);
				\draw [blue, fill=black] (4,1) circle (0.2cm and 0.2cm);
				\draw [blue, fill=black] (6,-5) circle (0.2cm and 0.2cm);
				\draw [blue, fill=black] (6,1) circle (0.2cm and 0.2cm);
				\node at (10,-2.5) {, and};
			\end{tikzpicture} 
		} \hspace{5cm}
	\end{align*}
		\vspace{-3.3cm}
	\begin{align*}
		\scalebox{0.8}{\hspace{8cm}
		\begin{tikzpicture}[scale=.2]
			\draw [very thick] (-8,1) to [out=-90, looseness=1.5, in=-90] (-1,1.5);
			\draw [very thick] (-8,3) to (-8,1);
			\draw [very thick] (-1,3) to (-1,1.2);
			\draw [blue, fill=black] (-1,3) circle (0.2cm and 0.2cm);
			\draw [blue, fill=black] (-8,3) circle (0.2cm and 0.2cm);
			\draw [very thick] (15,3) to (15,-2);
			\draw [very thick] (-10,3) to (-10,-2);
			\draw [blue, fill=black] (-10,3) circle (0.2cm and 0.2cm);
			\draw [blue, fill=black] (-10,-2) circle (0.2cm and 0.2cm);
			\node at (3,0) {= $\pm$ };
			\draw [very thick] (6,1) to [out=-90, looseness=1.5, in=-90] (13,1.5);
			\draw [very thick] (6,3) to (6,1);
			\draw [blue, fill=black] (6,3) circle (0.2cm and 0.2cm);
			\draw [blue, fill=black] (13,3) circle (0.2cm and 0.2cm);
			\draw [very thick] (13,3) to (13,1.1);
			\draw [blue, fill=black] (15,3) circle (0.2cm and 0.2cm);
			\draw [blue, fill=black] (15,-2) circle (0.2cm and 0.2cm);
			\node at (17,0) {.};
		\end{tikzpicture} 
	}
		\end{align*}
\end{example}
		\vspace{0.5cm}
		
Tensor product in $\mathcal D_{\zeta, n}$ is given by concatenation of points and diagrams, respectively. Hence $k\otimes l=k+l$ for all $k,l\in \mathbb N$. Thus the objects in the category $\mathcal D_{\zeta, n}$ are tensor-generated by one point. We show next that $\mathcal D_{\zeta, n}$ is a rigid monoidal category, with the following
	choices.
	\begin{enumerate}
		\item \emph{Associativity:} $\alpha_{k,l,j}:(k\otimes l)\otimes j \to k\otimes (l\otimes j)$ is the identity map $\text{id}_{k+l+j}$ for all $k,l,j\in \mathbb N$,
		\item \emph{Unit:}  given by $0\in \mathbb N,$ and represented by ``no points". The left and right unit isomorphisms $0\otimes k\to k$ and $ k\otimes 0\to k$ 
		are the identity $\text{id}_k$ for all $k\in \mathbb N$.
		\item \emph{Duals}: recall that for all $k\in \mathbb N,$ we have an isomorphism $k\xrightarrow{\sim} k+n$. Hence it is enough to define duals for $k$ such that $0\leq k < n$. In that case, the dual of $k$ is $n-k$ with evaluation and coevaluation maps given by:
		\begin{align*}
		\scalebox{0.8}{
			\begin{tikzpicture}[scale=.2]
				\node at (-14,-5) {$\text{eval}_k :=  \  \tiny \zeta^{-k}$};
				\draw [very thick] (-8,-6) to [out=-90, looseness=1, in=-90] (4,-6.5);
				\draw [very thick] (-8,-4) to (-8,-6);
				\draw [blue, fill=black] (-8,-4) circle (0.2cm and 0.2cm);
				\draw [very thick] (-6,-4) to (-6,-8.8);
				\draw [blue, fill=black] (-6,-4) circle (0.2cm and 0.2cm);
				\draw [very thick] (-4,-4) to (-4,-9.4);
				\draw [blue, fill=black] (-4,-4) circle (0.2cm and 0.2cm);
				\node at (-2,-6) {\tiny $n$};
				\node at (-2,-7) {$\dots$};
				\draw [very thick] (0,-4) to (0,-9.8);
				\draw [blue, fill=black] (0,-4) circle (0.2cm and 0.2cm);
				\draw [very thick] (2,-4) to (2,-9.2);
				\draw [blue, fill=black] (2,-4) circle (0.2cm and 0.2cm);
				\draw [very thick] (4,-4) to (4,-6.5);
				\draw [blue, fill=black] (4,-4) circle (0.2cm and 0.2cm);
					\node at (9,-6) {$:n \to 0$,};
		\end{tikzpicture} }
	\end{align*}
	and
		\begin{align*} 
			\scalebox{0.8}{
				\begin{tikzpicture}[scale=.2]
					\node at (-14,-5) {$\text{coeval}_k := $};
					\draw [very thick] (-8,-6) to [out=90, looseness=1, in=90] (4,-6.5);
					\draw [very thick] (-8,-6) to (-8,-8);
					\draw [blue, fill=black] (-8,-8) circle (0.2cm and 0.2cm);
					\draw [very thick] (-6,-3.5) to (-6,-8);
					\draw [blue, fill=black] (-6,-8) circle (0.2cm and 0.2cm);
					\draw [very thick] (-4,-2.8) to (-4,-8);
					\draw [blue, fill=black] (-4,-8) circle (0.2cm and 0.2cm);
					\node at (-2,-5) {\tiny $n$};
					\node at (-2,-6) {$\dots$};
					\draw [very thick] (0,-3) to (0,-8);
					\draw [blue, fill=black] (0,-8) circle (0.2cm and 0.2cm);
					\draw [very thick] (2,-3.7) to (2,-8);
					\draw [blue, fill=black] (2,-8) circle (0.2cm and 0.2cm);
					\draw [very thick] (4,-6.4) to (4,-8);
					\draw [blue, fill=black] (4,-8) circle (0.2cm and 0.2cm);
						\node at (9,-6) {$: 0 \to n,$};
				\end{tikzpicture} 
			}
		\end{align*}
		respectively.
	\end{enumerate}
	
	The triangle and pentagon axioms are trivially satisfied, as all morphisms involved are identities. We show below that duals are well defined for all $k\in \mathbb N$.  In fact, 
	using Relations \eqref{relations 1} and \eqref{relations 2}, we obtain
	
	\vspace{10cm}
		\begin{align*}
		\scalebox{0.8}{ 
			\begin{tikzpicture}[scale=.2]
				\node at (-12,-5) {$\zeta^{-k}$};
				\draw [very thick] (-8,-6) to [out=90, looseness=1, in=90] (4,-6.5);
				\draw [very thick] (-8,-6) to (-8,-14);
				\draw [very thick] (-6,-3.5) to (-6,-8);
				\draw [very thick] (-4,-2.8) to (-4,-14);
				\node at (-2,-5) {\tiny $n$};
				\node at (-2,-6) {$\dots$};
				\draw [very thick] (0,-3) to (0,-8);
				\draw [very thick] (2,-3.7) to (2,-8);
				\node at (-6,-11) {\tiny $k$};
				\node at (-6,-12) {$\dots$};
				\draw [very thick] (4,-6.4) to (4,-8);
			\end{tikzpicture} 
		}  \hspace{9cm}
	\end{align*}
	\vspace{-2.6cm}
	\begin{align*}
		\scalebox{0.8}	{
			\begin{tikzpicture}[scale=.2]
				\draw [very thick] (-8,-6) to [out=-90, looseness=1, in=-90] (4,-6.5);
				\draw [very thick] (-8,-4) to (-8,-6);	
				\node at (2,-2) {\tiny $k$};
				\node at (2,-3) {$\dots$};
				\draw [very thick] (-6,-4) to (-6,-8.8);
				\draw [very thick] (-4,-4) to (-4,-9.4);
				\node at (-2,-6) {\tiny $n$};
				\node at (-2,-7) {$\dots$};
				\draw [very thick] (0,1) to (0,-9.8);
				\draw [very thick] (2,-4) to (2,-9.2);
				\draw [very thick] (4,1) to (4,-6.5);
		\end{tikzpicture} } \hspace{6cm}
	\end{align*}

	\vspace{-2.6cm}
	\scalebox{0.8}{  \hspace{6cm}
		\begin{tikzpicture}[scale=.2]
			\draw [very thick] (-8,-6) to [out=-90, looseness=1, in=-90] (4,-6.5);
			\draw [very thick] (-8,-4) to (-8,-6);
			\draw [very thick] (-6,-4) to (-6,-8.8);
			\draw [very thick] (-4,-4) to (-4,-9.4);
			\draw [very thick] (0,-4) to (0,-9.8);
			\draw [very thick] (2,-4) to (2,-9.2);
			\draw [very thick] (4,-4) to (4,-6.5);
			\draw [very thick] (-8,-4) to [out=90, looseness=1, in=90] (4,-4.5);
			\draw [very thick] (-6,-1.3) to (-6,-8);
			\draw [very thick] (-6,-6) to (-6,-8);
			\draw [very thick] (-4,-0.8) to (-4,-8);
			\draw [very thick] (-4,-6) to (-4,-8);
			\node at (-14,-5) {$= \zeta^{-k+k}$};
			\node at (-2,-5) {\tiny $n$};
			\node at (-2,-6) {$\dots$};
			\draw [very thick] (0,-0.9) to (0,-8);
			\draw [very thick] (0,-6) to (0,-8);
			\draw [very thick] (2,-1.8) to (2,-8);
			\draw [very thick] (2,-6) to (2,-8);
	\end{tikzpicture} } 
	
	\vspace{-1.5cm}
	\scalebox{0.8}{ \hspace{11cm}
		\begin{tikzpicture}[scale=.2]
			\draw [very thick] (8,1) to (8,-5);
			\draw [blue, fill=black] (8,-5) circle (0.2cm and 0.2cm);
			\draw [blue, fill=black] (8,1) circle (0.2cm and 0.2cm);
			\node at (11,-2) {\tiny $k$};
			\node at (11,-3) {$\dots$};
			\draw [very thick] (14,1) to (14,-5);
			\draw [blue, fill=black] (14,-5) circle (0.2cm and 0.2cm);
			\draw [blue, fill=black] (14,1) circle (0.2cm and 0.2cm);
			\node at (16,-2.5) {=};
		\end{tikzpicture} 
	} 
	
	\vspace{-1cm}
	\scalebox{0.8}{ \hspace{13cm}
		\begin{tikzpicture}[scale=.2]
			\draw [very thick] (8,1) to (8,-5);
			\draw [blue, fill=black] (8,-5) circle (0.2cm and 0.2cm);
			\draw [blue, fill=black] (8,1) circle (0.2cm and 0.2cm);
			\node at (11,-2) {\tiny $k$};
			\node at (11,-3) {$\dots$};
					\node at (16,-3) {,};
			\draw [very thick] (14,1) to (14,-5);
			\draw [blue, fill=black] (14,-5) circle (0.2cm and 0.2cm);
			\draw [blue, fill=black] (14,1) circle (0.2cm and 0.2cm);
		\end{tikzpicture} 
	} 
	\vspace{1cm}

	and
	\begin{align*}
		\scalebox{0.8}{ 
			\begin{tikzpicture}[scale=.2]
					\node at (-18,-5) {$\zeta^{-k}$};
				\draw [very thick] (-8,-6) to [out=90, looseness=1, in=90] (4,-6.5);
				\draw [very thick] (-8,-6) to (-8,-8);
				\draw [very thick] (-6,-3.5) to (-6,-8);
				\draw [very thick] (-4,-2.8) to (-4,-8);
				\node at (-2,-5) {\tiny $n$};
				\node at (-2,-6) {$\dots$};
				\draw [very thick] (0,-3) to (0,-14);
				\draw [very thick] (2,-3.7) to (2,-10);
					\node at (2,-11) {\tiny $n-k$};
				\node at (2,-12) {$\dots$};
				\draw [very thick] (4,-6.4) to (4,-14);
			\end{tikzpicture} 
		}  \hspace{8cm}
	\end{align*}
	\vspace{-2.6cm}
	\begin{align*}
	\scalebox{0.8}	{
			\begin{tikzpicture}[scale=.2]
				\draw [very thick] (-8,-6) to [out=-90, looseness=1, in=-90] (4,-6.5);
				\draw [very thick] (-8,1) to (-8,-6);	
				\node at (-6,-2) {\tiny $n-k$};
				\node at (-6,-3) {$\dots$};
				\draw [very thick] (-6,-4) to (-6,-8.8);
				\draw [very thick] (-4,1) to (-4,-9.4);
				\node at (-2,-6) {\tiny $n$};
				\node at (-2,-7) {$\dots$};
				\draw [very thick] (0,-4) to (0,-9.8);
				\draw [very thick] (2,-4) to (2,-9.2);
				\draw [very thick] (4,-4) to (4,-6.5);
		\end{tikzpicture} } \hspace{8cm}
	\end{align*}

		\vspace{-2.6cm}
		 \scalebox{0.8}{  \hspace{10.2cm}
		\begin{tikzpicture}[scale=.2]
			\draw [very thick] (-8,-6) to [out=-90, looseness=1, in=-90] (4,-6.5);
			\draw [very thick] (-8,-4) to (-8,-6);
			\draw [very thick] (-6,-4) to (-6,-8.8);
			\draw [very thick] (-4,-4) to (-4,-9.4);
			\draw [very thick] (0,-4) to (0,-9.8);
			\draw [very thick] (2,-4) to (2,-9.2);
			\draw [very thick] (4,-4) to (4,-6.5);
			\draw [very thick] (-8,-4) to [out=90, looseness=1, in=90] (4,-4.5);
			\draw [very thick] (-6,-1.3) to (-6,-8);
			\draw [very thick] (-6,-6) to (-6,-8);
			\draw [very thick] (-4,-0.8) to (-4,-8);
			\draw [very thick] (-4,-6) to (-4,-8);
			\node at (-2,-5) {\tiny $n$};
				\node at (6,-6) {=};
			\node at (-2,-6) {$\dots$};
			\draw [very thick] (0,-0.9) to (0,-8);
			\draw [very thick] (0,-6) to (0,-8);
			\draw [very thick] (2,-1.8) to (2,-8);
			\draw [very thick] (2,-6) to (2,-8);
	\end{tikzpicture} } 

		\vspace{-1.5cm}
	\scalebox{0.8}{ \hspace{6cm}
	\begin{tikzpicture}[scale=.2]
		\draw [very thick] (8,1) to (8,-5);
		\draw [blue, fill=black] (8,-5) circle (0.2cm and 0.2cm);
		\draw [blue, fill=black] (8,1) circle (0.2cm and 0.2cm);
		\node at (1,-2) {$= \zeta^{-k-(n-k)}$};
		\node at (11,-2) {\tiny $n-k$};
		\node at (11,-3) {$\dots$};
		\draw [very thick] (14,1) to (14,-5);
		\draw [blue, fill=black] (14,-5) circle (0.2cm and 0.2cm);
		\draw [blue, fill=black] (14,1) circle (0.2cm and 0.2cm);
	\end{tikzpicture} 
} 

	\vspace{-1cm}
\scalebox{0.8}{ \hspace{13.5cm}
	\begin{tikzpicture}[scale=.2]
		\draw [very thick] (8,1) to (8,-5);
		\draw [blue, fill=black] (8,-5) circle (0.2cm and 0.2cm);
		\draw [blue, fill=black] (8,1) circle (0.2cm and 0.2cm);
		\node at (11,-2) {\tiny $n-k$};
		\node at (11,-3) {$\dots$};
		\draw [very thick] (14,1) to (14,-5);
		\draw [blue, fill=black] (14,-5) circle (0.2cm and 0.2cm);
		\draw [blue, fill=black] (14,1) circle (0.2cm and 0.2cm);
			\node at (16,-3) {,};
	\end{tikzpicture} 
} 

	\vspace{0.8cm}
as desired.

\subsection{Proof of the equivalence}\label{sec: proof of equiv}
	We show now that $\mathcal D_{\zeta,n}$ is a diagramatic description of $\Vect_{\mathbb Z_n}^{\zeta}$. That is, we show that $\mathcal D_{\zeta, n}$ is monoidally equivalent to the monoidal subcategory $\langle \delta_1\rangle$ of $\Vect_{\mathbb Z_n}^{\zeta}$ generated by $\delta_1$, whose additive envelope is equivalent to $\Vect_{\Z_n}^{\zeta}$.

	In Section \ref{section:application to Vecn} we proved that $\zeta$ is the constant associated to the generator $\delta_1$. We also know by Lemma \ref{lemma: independent of choice of iso} that this constant is independent of the choice of isomorphism $\delta_1^{\otimes n}\xrightarrow{\sim} \mathbf 1$. Fix then the canonical isomorphism $\lambda:\delta_1^{\otimes n} \xrightarrow{\sim} \mathbf 1$, and let $\mu:\mathbf 1\xrightarrow{\sim}  \delta_1^{\otimes n}$ be its inverse. Define a functor $F :\mathcal D_{\zeta,n}\to \langle \delta_1\rangle$ on generating objects and morphisms by 
	\begin{align}\label{def of F1}
		\begin{aligned}
		F:~\quad \mathcal D_{\zeta, n} & ~\longrightarrow ~\langle  \delta_1\rangle\\
		{
			\begin{tikzpicture}[scale=.2]
				\draw [blue, fill=black] (2,0) circle (0.2cm and 0.2cm);
			\end{tikzpicture} 
		} & ~ \longmapsto ~ \delta_1,\\
		\scalebox{0.8}{
			\begin{tikzpicture}[scale=.2]
				\draw [very thick] (-8,-6) to [out=-90, looseness=1, in=-90] (4,-6.5);
				\draw [very thick] (-8,-4) to (-8,-6);
				\draw [blue, fill=black] (-8,-4) circle (0.2cm and 0.2cm);
				\draw [very thick] (-6,-4) to (-6,-8.8);
				\draw [blue, fill=black] (-6,-4) circle (0.2cm and 0.2cm);
				\draw [very thick] (-4,-4) to (-4,-9.4);
				\draw [blue, fill=black] (-4,-4) circle (0.2cm and 0.2cm);
				\node at (-2,-6) {\tiny $n$};
				\node at (-2,-7) {$\dots$};
				\draw [very thick] (0,-4) to (0,-9.8);
				\draw [blue, fill=black] (0,-4) circle (0.2cm and 0.2cm);
				\draw [very thick] (2,-4) to (2,-9.2);
				\draw [blue, fill=black] (2,-4) circle (0.2cm and 0.2cm);
				\draw [very thick] (4,-4) to (4,-6.5);
				\draw [blue, fill=black] (4,-4) circle (0.2cm and 0.2cm);
		\end{tikzpicture} }
		&	~\longmapsto ~\left( \lambda:\delta_1^{\otimes n} \to \mathbf 1\right),\\
		\scalebox{0.8}{
			\begin{tikzpicture}[scale=.2]
				\draw [very thick] (-8,-6) to [out=90, looseness=1, in=90] (4,-6.5);
				\draw [very thick] (-8,-6) to (-8,-8);
				\draw [blue, fill=black] (-8,-8) circle (0.2cm and 0.2cm);
				\draw [very thick] (-6,-3.5) to (-6,-8);
				\draw [blue, fill=black] (-6,-8) circle (0.2cm and 0.2cm);
				\draw [very thick] (-4,-2.8) to (-4,-8);
				\draw [blue, fill=black] (-4,-8) circle (0.2cm and 0.2cm);
				\node at (-2,-5) {\tiny $n$};
				\node at (-2,-6) {$\dots$};
				\draw [very thick] (0,-3) to (0,-8);
				\draw [blue, fill=black] (0,-8) circle (0.2cm and 0.2cm);
				\draw [very thick] (2,-3.7) to (2,-8);
				\draw [blue, fill=black] (2,-8) circle (0.2cm and 0.2cm);
				\draw [very thick] (4,-6.4) to (4,-8);
				\draw [blue, fill=black] (4,-8) circle (0.2cm and 0.2cm);
			\end{tikzpicture} 
		}
		&	~\longmapsto ~\left(\mu:\mathbf 1 \to \delta_1^{\otimes n}\right),
		\end{aligned}
	\end{align}
	and extend it $\mathbf k$-linearly and monoidally to all morphisms. We want $F$ to be $\mathbf k$-linear monoidal functor that is both essentially surjective and fully-faithful. We prove this in what follows by a series of Lemmas. 

\begin{lemma}
The functor $F$ as above is well-defined. 
\end{lemma}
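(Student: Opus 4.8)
The plan is to verify that the assignment in \eqref{def of F1} respects every defining relation of $\mathcal D_{\zeta,n}$, so that it descends to a well-defined functor on the quotient. Concretely, $F$ is defined on generating objects and morphisms and then extended by composition, tensor product, and $\mathbf k$-linearity; since $\Hom_{\mathcal D_{\zeta,n}}(k,l)$ is the span of diagrams \emph{modulo} Relations \eqref{relations 1}, \eqref{relations 3}, and \eqref{relations 2}, well-definedness amounts to checking that the images under $F$ of the two sides of each relation agree as morphisms in $\langle \delta_1\rangle$. First I would record that $F$ is unambiguous on the level of freely generated diagrams: a diagram built by vertical stacking and concatenation is sent to the corresponding composite/tensor of the morphisms $\lambda$, $\mu$, and identities, and this is independent of how the diagram is decomposed into generators because composition and $\otimes$ in a monoidal category are associative and satisfy the interchange law. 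The content is therefore entirely in the three relations.

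The core computation is to translate each relation into an identity among $\lambda$, $\mu$, and the unit/associativity isomorphisms of $\Vect_{\mathbb Z_n}^{\zeta}$. Relation \eqref{relations 1}, the zig-zag cup-cap equal to $\mathrm{id}_0$, maps to the statement $\ev \circ \coev$-type composite built from $\mu$ followed by $\lambda$, which holds because $\mu = \lambda^{-1}$; this is the snake identity and follows immediately from $\lambda\mu = \mathrm{id}_{\mathbf 1}$. Relation \eqref{relations 3}, the other zig-zag equal to $\mathrm{id}_n$, similarly maps to $\mu\lambda = \mathrm{id}_{\delta_1^{\otimes n}}$, again automatic from $\lambda$ being an isomorphism with inverse $\mu$. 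The crucial relation is \eqref{relations 2}, the commutation relation producing the scalar $\zeta$: under $F$ its left-hand side is the composite that slides a $\lambda$-cap past a strand on one side versus the other, i.e. $\mathrm{id}_{\delta_1}\otimes \lambda$ compared with $\lambda \otimes \mathrm{id}_{\delta_1}$ on $\delta_1^{\otimes(n+1)}$, and I would show these differ exactly by the factor $\zeta$. This is precisely the content of Diagram \eqref{main diagram} together with Lemma \ref{constant for delta_1}, which established that $\zeta$ is the constant satisfying $l_{\delta_1}(\lambda\otimes\mathrm{id}) = \zeta\, r_{\delta_1}(\mathrm{id}\otimes\lambda)$; since we normalized $\omega$ so that $l_{\delta_1}=r_{\delta_1}=\mathrm{id}$, the relation reads $\lambda\otimes\mathrm{id} = \zeta(\mathrm{id}\otimes\lambda)$, matching \eqref{relations 2} on the nose.

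I expect the main obstacle to be bookkeeping rather than conceptual: one must confirm that the diagrammatic left- and right-hand sides of \eqref{relations 2} really correspond to $\lambda\otimes\mathrm{id}_{\delta_1}$ and $\mathrm{id}_{\delta_1}\otimes\lambda$ under $F$ once the extra through-strands (the flanking identity strands in the diagram) are accounted for, and that the placement of the cap relative to those strands matches the index conventions in \eqref{main diagram}. A subtlety worth isolating is that $F$ as written is declared only on generators, so strictly I should argue that the extension ``by composition, tensor, and linearity'' is itself well-defined before testing relations; this is handled by noting that $\langle \delta_1\rangle$ is a genuine monoidal category and $F$ is built as a monoidal functor, so the images of composites and tensor products are forced and consistent. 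Once the three relation-checks above are carried out, every morphism identity imposed in $\mathcal D_{\zeta,n}$ holds in the target, and hence $F$ factors through the quotient and is well-defined.
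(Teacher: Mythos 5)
Your proposal is correct and follows essentially the same route as the paper: the paper's proof likewise reduces well-definedness to checking the three relations, sending Relations \eqref{relations 1} and \eqref{relations 3} to $\lambda\mu=\mathrm{id}_{\mathbf 1}$ and $\mu\lambda=\mathrm{id}_{\delta_1^{\otimes n}}$, and Relation \eqref{relations 2} to the identity $\mathrm{id}_{\delta_1}\otimes\lambda=\zeta\,\lambda\otimes\mathrm{id}_{\delta_1}$ supplied by Lemma \ref{constant for delta_1} with the normalized cocycle. Your additional remarks on the consistency of the extension to freely generated diagrams only make explicit what the paper leaves implicit.
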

	\begin{proof}
		It is enough to check that $F$ vanishes  Relations \eqref{relations 1} and \eqref{relations 2}. 	 In fact,
		\begin{align*}
			\mu \circ \lambda = \text{id}_{\mathbf 1} &&\text{and} &&\lambda\circ \mu=\text{id}_{\delta_1^{\otimes n}}
		\end{align*}
		since $\lambda$ and $\mu$ are inverses, and
		\begin{align*}
			\text{id}_{\delta_1}\otimes \lambda^n = \zeta \lambda^n\otimes \text{id}_{\delta_1} 
		\end{align*}
		by Lemma \ref{constant for delta_1}.
	\end{proof}

\begin{lemma}\label{F1 is surjective}
	$F$ is surjective on objects. 
\end{lemma}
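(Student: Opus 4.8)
The plan is to unwind the generating descriptions of the two categories and then invoke the monoidality of $F$. Recall from Definition \ref{gens and rels} that an object of $\mathcal D_{\zeta,n}$ is a row of $k$ points for some $k\in\mathbb N$, and that the tensor product is concatenation of points, so that every object is the $k$-fold tensor power of the one-point object. Writing $\bullet$ for the single point, the object $k$ is thus $\bullet^{\otimes k}$ (with $\bullet^{\otimes 0}$ the empty row, i.e. the unit $0$). On the other side, by the definition of the monoidal subcategory generated by an object, the objects of $\langle\delta_1\rangle$ are exactly the tensor powers $\delta_1^{\otimes m}$ for $m\in\mathbb N$.

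First I would record that $F$ is defined to send $\bullet$ to $\delta_1$ and is extended monoidally, so it sends $k=\bullet^{\otimes k}$ to $\delta_1^{\otimes k}$; this is part of the definition \eqref{def of F1}. Then, given any object $Y$ of $\langle\delta_1\rangle$, the previous paragraph says $Y=\delta_1^{\otimes m}$ for some $m\in\mathbb N$, and taking the object $k=m$ yields $F(k)=\delta_1^{\otimes m}=Y$. Hence every object of $\langle\delta_1\rangle$ lies in the image of $F$, which is the desired surjectivity.

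I expect essentially no obstacle here. The only point that warrants a sentence of care is that $F$ genuinely respects the monoidal structure on objects, but this holds by construction, since $F$ is specified on the single generating object and extended monoidally, giving $F(\bullet^{\otimes k})=\delta_1^{\otimes k}$. Once the tensor-generation of both sides by a single object is made explicit, the statement is immediate.
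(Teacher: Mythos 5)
Your proof is correct and is exactly the argument the paper intends when it says the statement is ``immediate from the definition of $F$'': every object of $\mathcal D_{\zeta,n}$ is a tensor power of the single point, every object of $\langle\delta_1\rangle$ is a tensor power $\delta_1^{\otimes m}$, and $F$ matches these up by monoidality. You have simply made the one-line proof explicit; no further comment is needed.
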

\begin{proof}
	This is immediate from the definition of $F$.
\end{proof}

To show $F$ is fully-faithful, we must check that the map
	\begin{align*}
		\Hom_{\mathcal D_{\zeta, n}}(k,l) \xrightarrow{F} \Hom_{\Vect_{\mathbb Z_n}^{\zeta}}(\delta_1^{\otimes k}, \delta_1^{\otimes l})
	\end{align*}
induced by $F$ is bijective for all $k, l\in \mathbb N$. 
	By duality, it is enough to check this for
		\begin{align}\label{eq: surj of F}
		\Hom_{\mathcal D_{\zeta, n}}(k,0) \xrightarrow{F} \Hom_{\Vect_{\mathbb Z_n}^{\zeta}}(\delta_1^{\otimes k}, \mathbf 1)
	\end{align}
	for all $k \in \mathbb N$. 
	
	\begin{lemma}\label{F1 is full}
		The map in Equation $\eqref{eq: surj of F}$ is surjective for all $k\in \mathbb N$.
	\end{lemma}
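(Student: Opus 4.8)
The plan is to compute both sides explicitly and thereby reduce surjectivity to producing, in the single case where the target is nonzero, one diagram whose image spans it. On the target side, the object $\delta_1^{\otimes k}$ is the simple object supported in degree $\bar k$, i.e.\ $\delta_1^{\otimes k}\simeq \delta_{\bar k}$ in $\Vect_{\mathbb Z_n}^{\zeta}$, so by Schur's Lemma
\begin{align*}
	\Hom_{\Vect_{\mathbb Z_n}^{\zeta}}(\delta_1^{\otimes k},\mathbf 1)\simeq
	\begin{cases}
		\mathbf k & \text{if } n\mid k,\\
		0 & \text{if } n\nmid k.
	\end{cases}
\end{align*}
Thus when $n\nmid k$ the target is zero and surjectivity is automatic; this is consistent with the earlier observation that the source $\Hom_{\mathcal D_{\zeta,n}}(k,0)$ also vanishes unless $n\mid k$.

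It then remains to treat $n\mid k$, say $k=mn$ with $m\in\mathbb N$. Here the target is one-dimensional, so it is enough to exhibit a single diagram in $\Hom_{\mathcal D_{\zeta,n}}(mn,0)$ whose image under $F$ is nonzero. I would take the diagram $c_m$ obtained by placing $m$ side-by-side copies of the cap generator (the map $n\to 0$) from \eqref{generators}. By the definition of $F$ in \eqref{def of F1} together with its monoidality, $F(c_m)=\lambda^{\otimes m}$, regarded as a morphism $\delta_1^{\otimes mn}\to \mathbf 1$ after identifying $\mathbf 1^{\otimes m}$ with $\mathbf 1$ via the unit isomorphisms. Since $\lambda$ is an isomorphism, so is $\lambda^{\otimes m}$, and in particular $F(c_m)\neq 0$; as the target is one-dimensional, the image of $F$ is all of it, giving surjectivity.

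I do not expect a genuine obstacle in this lemma: once one notes that $\delta_1^{\otimes k}$ is simple, the whole content is the computation of the target Hom space and the bookkeeping identity $F(c_m)=\lambda^{\otimes m}$, both immediate from the definitions. The delicate half of full faithfulness—\emph{injectivity} of the same map, that is, verifying that the diagrammatic relations \eqref{relations 1}--\eqref{relations 2} already encode every coincidence of morphisms in $\langle \delta_1\rangle$—is the part that must be handled in the companion lemma rather than here.
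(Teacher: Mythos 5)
Your proof is correct and follows essentially the same route as the paper's: both arguments dispose of the case $n\nmid k$ by noting the target vanishes, and for $k=mn$ both reduce to the one-dimensionality of $\Hom_{\Vect_{\mathbb Z_n}^{\zeta}}(\delta_1^{\otimes k},\mathbf 1)$ and exhibit the tensor product of $m$ cap generators, which $F$ sends to the isomorphism $\lambda^{\otimes m}\neq 0$. The only cosmetic difference is that you invoke Schur's Lemma via $\delta_1^{\otimes k}\simeq\delta_{\bar k}$, while the paper simply identifies $\delta_1^{\otimes k}=\mathbf 1$ when $n\mid k$.
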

	\begin{proof}
	When $n$ does not divide $k$, both Hom spaces in Equation \eqref{eq: surj of F} are zero, so there is nothing to show. Suppose then that $n$ divides $k$, that is, that there exists $l\in \mathbb N$ so that $k=ln$. Then $\delta_1^{\otimes k}=\mathbf 1$, so
	\begin{align*}
		\Hom_{\Vect_{\mathbb Z_n}^{\zeta}}(\delta_1^{\otimes k}, \mathbf 1) \simeq \Hom_{\Vect_{\mathbb Z_n}^{\zeta}}(\mathbf 1, \mathbf 1)\simeq \mathbf k
	\end{align*}
and	it is enough to check the map in Equation \eqref{eq: surj of F} is non-zero. In fact, the morphism in $\Hom_{\mathcal D_{\zeta, n}}(k,0)$  given by
		\begin{align*}
		\scalebox{0.8}{
			\begin{tikzpicture}[scale=.2]
				\draw [very thick] (-8,-6) to [out=-90, looseness=1, in=-90] (4,-6.5);
				\draw [very thick] (-8,-4) to (-8,-6);
				\draw [blue, fill=black] (-8,-4) circle (0.2cm and 0.2cm);
				\draw [very thick] (-6,-4) to (-6,-8.8);
				\draw [blue, fill=black] (-6,-4) circle (0.2cm and 0.2cm);
				\draw [very thick] (-4,-4) to (-4,-9.4);
				\draw [blue, fill=black] (-4,-4) circle (0.2cm and 0.2cm);
				\node at (-2,-6) {\tiny $n$};
				\node at (-2,-7) {$\dots$};
				\draw [very thick] (0,-4) to (0,-9.8);
				\draw [blue, fill=black] (0,-4) circle (0.2cm and 0.2cm);
				\draw [very thick] (2,-4) to (2,-9.2);
				\draw [blue, fill=black] (2,-4) circle (0.2cm and 0.2cm);
				\draw [very thick] (4,-4) to (4,-6.5);
				\draw [blue, fill=black] (4,-4) circle (0.2cm and 0.2cm);
		\end{tikzpicture} }
		\scalebox{0.8}{
			\begin{tikzpicture}[scale=.2]
				\draw [very thick] (-8,-6) to [out=-90, looseness=1, in=-90] (4,-6.5);
				\draw [very thick] (-8,-4) to (-8,-6);
				\draw [blue, fill=black] (-8,-4) circle (0.2cm and 0.2cm);
				\draw [very thick] (-6,-4) to (-6,-8.8);
				\draw [blue, fill=black] (-6,-4) circle (0.2cm and 0.2cm);
				\draw [very thick] (-4,-4) to (-4,-9.4);
				\draw [blue, fill=black] (-4,-4) circle (0.2cm and 0.2cm);
				\node at (-2,-6) {\tiny $n$};
				\node at (-2,-7) {$\dots$};
				\draw [very thick] (0,-4) to (0,-9.8);
				\draw [blue, fill=black] (0,-4) circle (0.2cm and 0.2cm);
				\draw [very thick] (2,-4) to (2,-9.2);
				\draw [blue, fill=black] (2,-4) circle (0.2cm and 0.2cm);
				\draw [very thick] (4,-4) to (4,-6.5);
				\draw [blue, fill=black] (4,-4) circle (0.2cm and 0.2cm);
				\node at (-11,-5) {\tiny $l$};
				\node at (-11,-6) {$\dots$};
				\node at (11,-6) {$:ln \to 0$};
		\end{tikzpicture} }
	\end{align*}
 is mapped via $F_1$ to the isomorphism  $\lambda^{\otimes l}:(\delta_1)^{\otimes nl} \xrightarrow{\sim} \mathbf 1.$
		\end{proof}
	
	It remains to check that \eqref{eq: surj of F} is injective. For this, we inspect $\Hom_{\mathcal D_{\zeta,n}}(k,0)$ for all $k\in \mathbb N$.
	
	\begin{lemma}\label{one dim Hom}
		Let $k\in \mathbb N$ be divisible by $n$. Then  $\Hom_{\mathcal D_{\zeta, n}}(k,0)$  is the $\mathbf k-$span of
		\begin{align*}
			\scalebox{0.8}{
				\begin{tikzpicture}[scale=.2]
					\draw [very thick] (-8,-6) to [out=-90, looseness=1, in=-90] (4,-6.5);
					\draw [very thick] (-8,-4) to (-8,-6);
					\draw [blue, fill=black] (-8,-4) circle (0.2cm and 0.2cm);
					\draw [very thick] (-6,-4) to (-6,-8.8);
					\draw [blue, fill=black] (-6,-4) circle (0.2cm and 0.2cm);
					\draw [very thick] (-4,-4) to (-4,-9.4);
					\draw [blue, fill=black] (-4,-4) circle (0.2cm and 0.2cm);
					\node at (-2,-6) {\tiny $n$};
					\node at (-2,-7) {$\dots$};
					\draw [very thick] (0,-4) to (0,-9.8);
					\draw [blue, fill=black] (0,-4) circle (0.2cm and 0.2cm);
					\draw [very thick] (2,-4) to (2,-9.2);
					\draw [blue, fill=black] (2,-4) circle (0.2cm and 0.2cm);
					\draw [very thick] (4,-4) to (4,-6.5);
					\draw [blue, fill=black] (4,-4) circle (0.2cm and 0.2cm);
			\end{tikzpicture} }
			\scalebox{0.8}{
				\begin{tikzpicture}[scale=.2]
					\draw [very thick] (-8,-6) to [out=-90, looseness=1, in=-90] (4,-6.5);
					\draw [very thick] (-8,-4) to (-8,-6);
					\draw [blue, fill=black] (-8,-4) circle (0.2cm and 0.2cm);
					\draw [very thick] (-6,-4) to (-6,-8.8);
					\draw [blue, fill=black] (-6,-4) circle (0.2cm and 0.2cm);
					\draw [very thick] (-4,-4) to (-4,-9.4);
					\draw [blue, fill=black] (-4,-4) circle (0.2cm and 0.2cm);
					\node at (-2,-6) {\tiny $n$};
					\node at (-2,-7) {$\dots$};
					\draw [very thick] (0,-4) to (0,-9.8);
					\draw [blue, fill=black] (0,-4) circle (0.2cm and 0.2cm);
					\draw [very thick] (2,-4) to (2,-9.2);
					\draw [blue, fill=black] (2,-4) circle (0.2cm and 0.2cm);
					\draw [very thick] (4,-4) to (4,-6.5);
					\draw [blue, fill=black] (4,-4) circle (0.2cm and 0.2cm);
					\node at (-11,-5) {\tiny $l$};
					\node at (-11,-6) {$\dots$};
					\node at (11,-6) {$:k \to 0$,};
			\end{tikzpicture} }
		\end{align*}
		where $l\in \mathbb N$ is such that $k=ln$.
		\end{lemma}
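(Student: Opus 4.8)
The plan is to reduce the statement to a computation of the single endomorphism algebra $\End_{\mathcal D_{\zeta,n}}(0)$. Write $E_l$ for the displayed diagram, the concatenation of $l$ copies of the cap generator, and let $U_l$ be its vertical mirror, the concatenation of $l$ copies of the cup generator. Relation \eqref{relations 1} applied in each of the $l$ columns gives $E_l\circ U_l=\mathrm{id}_0$, and Relation \eqref{relations 3} applied in each column gives $U_l\circ E_l=\mathrm{id}_k$; hence $E_l$ is an isomorphism $k\xrightarrow{\sim}0$ with inverse $U_l$. Consequently the map $\End_{\mathcal D_{\zeta,n}}(0)\to\Hom_{\mathcal D_{\zeta,n}}(k,0)$, $g\mapsto g\circ E_l$, is a linear bijection sending $\mathrm{id}_0$ to $E_l$. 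Therefore it suffices to prove $\End_{\mathcal D_{\zeta,n}}(0)=\mathbf k\cdot\mathrm{id}_0$, and the claim $\Hom_{\mathcal D_{\zeta,n}}(k,0)=\mathbf k\cdot E_l$ follows.

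To compute $\End(0)$ I would show that every closed diagram -- a composite of caps and cups with empty top and bottom boundary -- equals a scalar multiple of the empty diagram $\mathrm{id}_0$ modulo the relations, proceeding by induction on the number of cups it contains. Since $\mathcal D_{\zeta,n}$ carries no braiding, such a diagram is planar with no crossings, and a balance of strands forces the numbers of caps and cups to coincide. In the inductive step I would isolate an innermost cup, bring it into a configuration in which its $n$ output strands are closed off directly by a single cap (moving any intervening identity strands across caps by Relation \eqref{relations 2}, each move contributing a factor $\zeta^{\pm1}$), and then cancel the resulting cup--cap bubble by Relation \eqref{relations 1}. This removes one cup and one cap, strictly decreasing the induction parameter, and the inductive hypothesis yields a scalar multiple of $\mathrm{id}_0$; the base case of zero cups is the empty diagram itself.

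The main obstacle is the planar bookkeeping in the inductive step: because the cap and cup generators are $n$-valent, the $n$ strands emanating from a cup need not be grabbed cleanly by a single cap, and one must argue that among all cups there is always one that can be so isolated after sliding. I expect the cleanest route is a height/nesting argument on the no-crossing diagram -- choose the cup that is innermost with respect to the partial order given by planar nesting, so that nothing lies between it and the caps closing its strands -- together with the observation that Relation \eqref{relations 2}, and its mirror for the cup generator, lets one permute strands past caps freely at the cost of powers of $\zeta$. Once $\End(0)=\mathbf k\cdot\mathrm{id}_0$ is established the stated spanning follows, and the fact that $E_l\neq 0$, so that this span is genuinely one-dimensional, is recovered from $F(E_l)=\lambda^{\otimes l}\neq 0$ together with Lemma \ref{F1 is full}.
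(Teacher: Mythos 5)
Your opening reduction is correct and is a genuine departure from the paper's proof. Writing $f_n\colon n\to 0$ and $g_n\colon 0\to n$ for the generating cap and cup, Relations \eqref{relations 1} and \eqref{relations 3}, applied columnwise via the interchange law (which both you and the paper use implicitly by treating diagrams up to planar deformation), do give $E_l\circ U_l=\mathrm{id}_0$ and $U_l\circ E_l=\mathrm{id}_k$, so precomposition with $E_l$ is a linear bijection $\End_{\mathcal D_{\zeta,n}}(0)\to\Hom_{\mathcal D_{\zeta,n}}(k,0)$ sending $\mathrm{id}_0$ to $E_l$. The paper never makes this observation: it works directly with arbitrary diagrams $k\to 0$, decomposing them into horizontal building blocks (tensor products of $f_n$'s with identities, or of $g_n$'s with identities), cancelling caps in one block against cups in an adjacent block by sliding with Relation \eqref{relations 2} and cancelling with Relations \eqref{relations 1} and \eqref{relations 3}, and then normal-forming the resulting caps-only diagram to $f_n^{\otimes l}$. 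Your formulation buys a cleaner target statement, and your closing remark that $E_l\ne 0$ (via $F(E_l)=\lambda^{\otimes l}$ and Lemma \ref{F1 is full}) pins down actual one-dimensionality, which the paper only exploits implicitly later.

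However, the second half of your proposal has a genuine gap, and it sits exactly at the content of the lemma: your induction needs that some cup can always be isolated and cancelled after sliding, and you leave this as an expectation. The innermost-in-the-nesting-order selection you suggest does not obviously deliver it: already for $n=2$, take cups $A,B$ side by side (outputs $1,2$ and $3,4$), a cap on strands $2,3$ and a cap on strands $1,4$; both cups are nesting-minimal, yet neither has its two strands ending on a single cap. Sliding rescues this example, but then the nesting order plays no role, and the general claim is still unproved. The fix is simpler than a planarity analysis and is in effect what the paper does: decompose the closed diagram into layers each containing a single generator. The top layer is forced to be a cup and the bottom layer a cap, so some cup layer is immediately followed by a cap layer. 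Relation \eqref{relations 2} slides that cap horizontally past identity strands to any position within its layer, at the cost of a power of $\zeta$, in particular onto the outputs of the cup directly above it; then Relation \eqref{relations 1} and the interchange law delete the pair, reducing the number of cups by one and closing the induction. Note also that this argument uses only the cap-sliding Relation \eqref{relations 2}; the ``mirror'' relation for cups that you invoke is not among the defining relations --- it is derivable from the three relations (one gets $g_n\otimes\mathrm{id}_1=\zeta\,\mathrm{id}_1\otimes g_n$), so if you rely on it you must first prove it.
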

		
		\begin{proof}
		Let $f_n$ and $g_n$ denote the generating morphisms $n\to 0$ and $0\to n$  in Equation \eqref{generators}, respectively. We call a map in $\mathcal D_{\zeta, n}$ a \emph{building block} if it is a tensor product of either $f_n$'s and identity maps, or $g_n$'s and identity maps. Then any morphism in $\mathcal D_{\zeta, n}$ is given by vertical stacking of building blocks. 
			
Let $b_1:k_1\to k_2$ and $b_2:k_2\to k_3$ be two building blocks, constructed out of tensor products of  $f_n$'s and $g_n$'s, respectively. Then we can turn the composition $b_2b_1$ into a scalar product of a unique block. In fact, using Relation \eqref{relations 2} we can move any component $f_n$ in $b_1$ left or right to cancel out with a component $g_n$ of $b_2$ using Relation \eqref{relations 1}, until we  run out of either $f_n$'s or $g_n$'s. Hence we reduce $b_2b_1$ to a scalar multiple of a unique building block. Analogously, if $b_1$ and $b_2$, are constructed out of tensor products of $g_n$'s and $f_n$'s, respectively, we can do the same using Relations $\eqref{relations 2}$ and $\eqref{relations 3}$. 

Hence, any map in $\mathcal D_{\zeta, n}$ can be reduced to a map built with only $f_n$'s or $g_n$'s (and identity maps). So if we want a map $k\to 0$, it must be the case that it is built with only $f_n$'s and identities. Let $f:k\to 0$, and let $b_1, \dots, b_l$ be building blocks such that $f=b_l\dots b_1$. Using Relation \eqref{relations 2}, we can write $b_i$ as a scalar multiple of $f_n^{\otimes j_i}\otimes \text{id}^{\otimes n(l-(j_1+\dots j_i))}$ for some $j_i\in \mathbb N$ such that $\sum_{i=1}^l j_i=l$, for all $i=1, \dots, l$. Hence $f$ is a scalar multiple of the composition 
\begin{align*}
 (f_n^{\otimes j_l})\dots (f_n^{\otimes j_2}\otimes \text{id}^{\otimes n(l-(j_1+j_2))})	(f_n^{\otimes j_1}\otimes \text{id}^{\otimes n(l-j_1)})&= f_n^{\otimes j_1}\otimes f_n^{\otimes j_1} \otimes \dots \otimes f_n^{\otimes j_l} \\&=f_n^{\otimes l},
\end{align*}
as desired. 
\end{proof}

\begin{lemma}\label{F1 is faithful}
	The map in Equation \eqref{eq: surj of F} is injective for all $k\in \mathbb N$. 
\end{lemma}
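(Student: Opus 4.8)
The plan is to reduce the claim to the structural result already obtained in Lemma~\ref{one dim Hom}, so that injectivity becomes an essentially one-dimensional linear-algebra statement. First I would split into two cases according to whether $n$ divides $k$. If $n \nmid k$, then by the Remark following Definition~\ref{gens and rels} we have $\Hom_{\mathcal D_{\zeta,n}}(k,0) = 0$, since $k$ and $0$ are not congruent modulo $n$; a linear map out of the zero space is trivially injective, so there is nothing to prove in this case. (On the target side, correspondingly, $\delta_1^{\otimes k}$ is a nontrivial simple object and $\Hom_{\Vect_{\mathbb Z_n}^{\zeta}}(\delta_1^{\otimes k}, \mathbf 1)=0$ as well.)

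The substantive case is $n \mid k$, say $k = ln$ with $l \in \mathbb N$. Here I would invoke Lemma~\ref{one dim Hom}, which tells us that $\Hom_{\mathcal D_{\zeta, n}}(k,0)$ is the $\mathbf k$-span of the single diagram $f_n^{\otimes l}$ (the vertical-free stacking of $l$ copies of the generator $f_n \colon n \to 0$). In particular the domain is at most one-dimensional. To establish injectivity it therefore suffices to verify that $F$ does not annihilate this spanning diagram. By the definition of $F$ on generators in Equation~\eqref{def of F1} and its monoidal extension, we have $F(f_n^{\otimes l}) = \lambda^{\otimes l}$, where $\lambda \colon \delta_1^{\otimes n} \xrightarrow{\sim} \mathbf 1$ is the fixed (canonical) isomorphism. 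Since $\lambda$ is an isomorphism, so is its tensor power $\lambda^{\otimes l} \colon \delta_1^{\otimes ln} \xrightarrow{\sim} \mathbf 1$, and in particular it is a nonzero morphism. Hence $F$ sends a basis vector of the one-dimensional space $\Hom_{\mathcal D_{\zeta,n}}(k,0)$ to a nonzero element, so its kernel is trivial and the map in Equation~\eqref{eq: surj of F} is injective.

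I expect essentially no obstacle here: the genuine work has already been done in Lemma~\ref{one dim Hom}, where the diagrammatic relations~\eqref{relations 1}, \eqref{relations 3}, and~\eqref{relations 2} were used to collapse every morphism $k \to 0$ to a scalar multiple of $f_n^{\otimes l}$. Once the domain is known to be at most one-dimensional, injectivity follows from the single observation that the image of the generator is an isomorphism, hence nonzero. The only point requiring minor care is to note that the same computation simultaneously shows the domain is exactly one-dimensional (not zero) when $n \mid k$, since $F(f_n^{\otimes l}) \neq 0$ forces $f_n^{\otimes l} \neq 0$; combined with the surjectivity established in Lemma~\ref{F1 is full}, this yields that~\eqref{eq: surj of F} is in fact a bijection, which is what fully-faithfulness of $F$ ultimately requires.
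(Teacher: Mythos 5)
Your proof is correct and follows essentially the same route as the paper: both arguments rest on Lemma~\ref{one dim Hom} to see that $\Hom_{\mathcal D_{\zeta,n}}(k,0)$ is spanned by the single diagram $f_n^{\otimes l}$, and then observe that $F$ does not kill it. The only cosmetic difference is that the paper deduces this nonvanishing by citing the surjectivity of Lemma~\ref{F1 is full} (a surjection between one-dimensional spaces is injective), whereas you recompute $F(f_n^{\otimes l})=\lambda^{\otimes l}\neq 0$ directly, which is exactly the computation underlying that surjectivity lemma anyway.
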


\begin{proof}
	We already know by Lemma \ref{F1 is full} that 	
		$\Hom_{\mathcal D_{\zeta, n}}(k,0) \xrightarrow{F} \Hom_{\Vect_{\mathbb Z_n}^{\zeta}}(\delta_1^{\otimes k}, \mathbf 1)$ is surjective for all $k\in \mathbb N$. By Lemma \ref{one dim Hom} both Hom-spaces are one-dimensional, so it is also injective. 
\end{proof}

\begin{corollary}
The functor	$F: \mathcal D_{\zeta, n}\to \langle \delta_1\rangle$ defined in Equation \eqref{def of F1} is an equivalence.  
\end{corollary}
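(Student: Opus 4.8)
The plan is to use the standard fact that a $\mathbf k$-linear monoidal functor is a monoidal equivalence exactly when it is essentially surjective and fully-faithful, and then to read off these two properties from the lemmas already established. Since $F$ was defined on generators and extended $\mathbf k$-linearly and monoidally, it is $\mathbf k$-linear and monoidal by construction, so the entire content is to verify the two conditions. Essential surjectivity is immediate: by Lemma \ref{F1 is surjective} the functor is surjective on objects, as $F(k)=\delta_1^{\otimes k}$ and every object of $\langle\delta_1\rangle$ has this form.

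For full-faithfulness I would show that the induced map
\[
\Hom_{\mathcal D_{\zeta, n}}(k,l) \xrightarrow{\ F\ } \Hom_{\Vect_{\mathbb Z_n}^{\zeta}}(\delta_1^{\otimes k}, \delta_1^{\otimes l})
\]
is bijective for all $k,l\in\mathbb N$. Using that both $\mathcal D_{\zeta, n}$ and $\langle\delta_1\rangle$ are rigid and that $F$ is monoidal, the evaluation/coevaluation adjunctions identify each side with a Hom-space into the unit, compatibly with $F$; this is the duality reduction already indicated above, and it reduces the problem to the bijectivity of the map \eqref{eq: surj of F} for all $k$. That bijectivity is exactly the conjunction of Lemma \ref{F1 is full} (surjectivity) and Lemma \ref{F1 is faithful} (injectivity, itself resting on the one-dimensionality established in Lemma \ref{one dim Hom}). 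Assembling these, $F$ is fully-faithful and essentially surjective, hence a monoidal equivalence.

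I expect the only step requiring genuine care — as opposed to citation — to be the duality reduction itself: one must check that $F$ sends the chosen evaluation and coevaluation diagrams of $\mathcal D_{\zeta, n}$ to morphisms exhibiting $\delta_1^{\otimes(n-k)}$ as a dual of $\delta_1^{\otimes k}$ in $\langle\delta_1\rangle$, so that the naturality squares comparing $\Hom_{\mathcal D_{\zeta, n}}(k,l)$ with a Hom-space of the form $\Hom_{\mathcal D_{\zeta, n}}(k',0)$ commute on both the diagrammatic and the categorical side. Once this compatibility is recorded, every remaining piece is supplied by the preceding lemmas and the Corollary follows formally; the substantive combinatorics has already been absorbed into Lemma \ref{one dim Hom} and its reduction of arbitrary morphisms to powers of the generator via Relations \eqref{relations 1}, \eqref{relations 3} and \eqref{relations 2}.
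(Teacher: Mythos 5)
Your proposal is correct and follows essentially the same route as the paper: essential surjectivity via Lemma \ref{F1 is surjective}, and full-faithfulness by the duality reduction to the maps \eqref{eq: surj of F}, whose bijectivity is Lemmas \ref{F1 is full} and \ref{F1 is faithful} (the latter resting on Lemma \ref{one dim Hom}). Your added remark that one must verify $F$ sends the diagrammatic evaluation/coevaluation to genuine duality data in $\langle\delta_1\rangle$ makes explicit a step the paper passes over silently with ``by duality,'' but it is the same argument.
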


\begin{proof}
This follows from Lemmas \ref{F1 is full}, \ref{F1 is full} and \ref{F1 is faithful}.
\end{proof}


	\subsection{Universal property of $\Vect_{\mathbb Z_n}^{\zeta}$}
	\label{section:universal property}
	
Let $\C$ be a tensor category. Now that we have a description of $\Vect_{\mathbb Z_n}^{\zeta}$ by generators and relations (see Definition \ref{gens and rels}), to define a tensor functor $F:\Vect_{\mathbb Z_n}^{\zeta}\to \C$ it is enough to make a choice of:
\begin{enumerate}
	\item  an object $X\in \C$, and
	\item  an isomorphism $\lambda: X^{\otimes n}\xrightarrow{\sim} \mathbf 1$ such that $\text{id}\otimes \lambda = \zeta \lambda\otimes \text{id}$.
\end{enumerate}
We refer to the corresponding functor by $F_{X, \lambda}$. That is, $F_{X, \lambda}:\Vect_{\mathbb Z_n}^{\zeta}\to \C$ is the tensor functor obtained from mapping $\delta_1\mapsto X$ and the canonical isomorphism $\delta_1^{\otimes n}\xrightarrow{\sim }\mathbf 1$ to $\lambda$.  We then have a bijection:
	\begin{align*}
&	\begin{Bmatrix}\text{Tensor functors} \\ \Vect_{\mathbb Z_n}^{\zeta} \to \mathcal C  
	\end{Bmatrix}
	\longleftrightarrow 
	\begin{Bmatrix}
		\text{pairs } (X,\lambda), \text{ where } X\in \C \text{ and }   \lambda: X^{\otimes n}\xrightarrow{\sim} \mathbf 1  \\
			\ 	 \text{is such that } \text{id}\otimes \lambda = \zeta \lambda\otimes \text{id}
	\end{Bmatrix}.\\
& \hspace{2.5cm}F_{X, \lambda} \longleftrightarrow (X,\lambda)
\end{align*}

	Formally, we can consider the category $\operatorname{Fun}_{\otimes}(\Vect_{\mathbb Z_n}^{\zeta}, \C)$ with objects tensor functors $\Vect_{\mathbb Z_n}^{\zeta}\to \C$ and morphisms monoidal natural transformations between functors. Then the bijection above can be promoted to an equivalence of categories in the following way. Let $\mathcal C_{\zeta}$ denote the category with:
		\begin{itemize}
			\item Objects are pairs $(X,\lambda),$ where $X$ is an object in $\C$ and $\lambda: X^{\otimes n}\xrightarrow{\sim} \mathbf 1$ satisfies  $\text{id}\otimes \lambda = \zeta \lambda\otimes \text{id}$. 
			\item Maps $(X,\lambda) \to (X', \lambda')$ are morphisms $f:X\to X'$ in $\C$ such that $\lambda'f^{\otimes n}=\lambda$.
		\end{itemize}
		Then we have an equivalence of categories
		\begin{align*}
		\Phi:	\operatorname{Fun}_{\otimes}(\Vect_{\mathbb Z_n}^{\zeta}, \C) &\longrightarrow \mathcal C_{\zeta}\\
			F_{X,\lambda}&\mapsto (X,\lambda)\\
			(\tau : F_{X,\lambda} \to F_{X', \lambda'} ) &\mapsto (\tau_{\delta_1}: X\to X').
		\end{align*}
	\begin{proposition}\label{equivalence of categories}
		The map defined above is in fact an equivalence of categories.
	\end{proposition}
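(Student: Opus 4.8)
The plan is to verify that $\Phi$ is essentially surjective, faithful, and full, which together give the asserted equivalence. Essential surjectivity is immediate from the universal property established via the diagrammatic presentation: given any object $(X,\lambda)$ of $\mathcal C_{\zeta}$, the pair consists of an object $X\in \C$ together with an isomorphism $\lambda:X^{\otimes n}\xrightarrow{\sim}\mathbf 1$ satisfying $\text{id}\otimes\lambda=\zeta\,\lambda\otimes\text{id}$, which is precisely the data needed to define the tensor functor $F_{X,\lambda}$. Since $\Phi(F_{X,\lambda})=(X,\lambda)$ by construction, every object of $\mathcal C_{\zeta}$ lies in the essential image.

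For faithfulness, I would use that $\delta_1$ tensor-generates $\langle\delta_1\rangle$, whose additive envelope is $\Vect_{\mathbb Z_n}^{\zeta}$. Given a monoidal natural transformation $\tau:F_{X,\lambda}\to F_{X',\lambda'}$, the compatibility of $\tau$ with the tensor structures $J,J'$ forces $\tau_{\delta_1^{\otimes k}}$ to be the composite of $\tau_{\delta_1}^{\otimes k}=f^{\otimes k}$ with the structure isomorphisms of the two functors, for every $k$. Thus $\tau_{\delta_1}$ determines $\tau$ on all tensor powers of $\delta_1$, and by $\mathbf k$-linearity and additivity it determines $\tau$ on all objects. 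Hence the component map $\tau\mapsto\tau_{\delta_1}$ is injective, so $\Phi$ is faithful.

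For fullness, given a morphism $f:X\to X'$ in $\C$ with $\lambda' f^{\otimes n}=\lambda$, I would define $\tau_{\delta_1^{\otimes k}}$ to be $f^{\otimes k}$ conjugated by the structure isomorphisms $J,J'$, and extend $\tau$ additively and $\mathbf k$-linearly. The point requiring care is well-definedness at the wrap-around: in $\Vect_{\mathbb Z_n}^{\zeta}$ one has $\delta_1^{\otimes n}=\mathbf 1$ as objects, so the prescription at $k=n$ must be consistent with the unit axiom $\tau_{\mathbf 1}=\text{id}$. Naturality of $\tau$ with respect to the canonical isomorphism $c:\delta_1^{\otimes n}\xrightarrow{\sim}\mathbf 1$, together with $F_{X,\lambda}(c)=\lambda$ and $F_{X',\lambda'}(c)=\lambda'$ (modulo $J$), yields exactly the equation $\lambda=\lambda' f^{\otimes n}$; so the hypothesis $\lambda' f^{\otimes n}=\lambda$ is precisely what makes the assignment well-defined and compatible with the identification $\delta_1^{\otimes n}=\mathbf 1$.

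I expect the main obstacle to be checking naturality of the constructed $\tau$ against every morphism, not merely against $c$. Here I would invoke the generators-and-relations description of Definition \ref{gens and rels}: since every morphism of $\Vect_{\mathbb Z_n}^{\zeta}$ is built by composition and tensor product from $\lambda$ and its inverse $\mu$ (the images of the two generating diagrams) together with identities, naturality of $\tau$ reduces to the two naturality squares for $\lambda$ and for $\mu$. The square for $\lambda$ is the identity $\lambda=\lambda' f^{\otimes n}$, and the square for $\mu$ follows by inverting it. Monoidality of $\tau$ holds by construction from the $J,J'$-conjugated tensor powers. Combining these three properties shows $\Phi$ is an equivalence of categories.
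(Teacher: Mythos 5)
Your proposal is correct and follows essentially the same route as the paper: both reduce everything to the component at $\delta_1$, establish surjectivity on objects immediately, construct the inverse on hom-sets by setting $\tau_{\delta_1}=f$ and extending monoidally, and identify the naturality square against the canonical isomorphism $\delta_1^{\otimes n}\xrightarrow{\sim}\mathbf 1$ as the point where the condition $\lambda' f^{\otimes n}=\lambda$ enters. The only difference is level of detail: the paper dismisses the verification that the monoidal extension is well defined and natural as ``a quick check,'' whereas you spell it out (wrap-around consistency at $\delta_1^{\otimes n}=\mathbf 1$ and reduction of naturality to the generating morphisms $\lambda$ and $\mu$).
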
	

	\begin{proof}
	To show that $\Phi$ is well defined, we need to prove that  any monoidal natural transformation $\tau : F_{X,\lambda} \to F_{X', \lambda'}$ satisfies $\lambda'\tau_{\delta_1}^{\otimes n}=\lambda$. In fact, this follows from naturality of $\tau$, which gives  the following commutative diagram
	\begin{equation*} 
		\begin{tikzcd}  
			&X^{\otimes n}\arrow{d}[left]{\lambda} \arrow{r}{\tau_{\delta_1}^{\otimes n}} &X'^{\otimes n} \arrow{d}{\lambda'} \\
		   & \mathbf 1 \arrow{r}{\tau_{\mathbf 1}=\text{id}}  & \mathbf 1.
		\end{tikzcd}
	\end{equation*}

	It is clear that $\Phi$ is surjective on objects. It remains to show that $\Phi$ is fully-faithful. For this, we prove that
		\begin{align}\label{bijectivity of Phi}
		\Hom_{\operatorname{Fun}_{\otimes}(\Vect_{\mathbb Z_n}^{\zeta}, \C) }(F_{X,\lambda}, F_{X',\lambda'})\xrightarrow{\Phi} \Hom_{\C_{\zeta}}((X,\lambda), (X', \lambda'))
	\end{align}
	is bijective for every suitable pair $(X,\lambda)$. Let $f:(X,\lambda)\to (X', \lambda')$ be a morphism in $\C_{\zeta}$, and define a monoidal natural transformation $\tau: F_{X,\lambda} \to F_{X', \lambda'}$ by $\tau_{\delta_1}=f$ and extend monoidally everywhere else. It is a quick check that this gives a well defined inverse $\Hom_{\C_{\zeta}}((X,\lambda), (X', \lambda')) \to \Hom_{\operatorname{Fun}_{\otimes}(\Vect_{\mathbb Z_n}^{\zeta}, \C) }(F_{X,\lambda}, F_{X',\lambda'})$ for $\Phi$, and so the statement follows. 
	\end{proof}
	
	\section{Automorphism group of $\Vect_{\mathbb Z_n}^{\zeta}$}\label{sec: 2-group}
	
In this section, we give a description of the 2-group $\Aut_{\otimes}(\Vect_{\mathbb Z_n}^{\zeta})$ of tensor automorphisms of $\Vect_{\mathbb Z_n}^{\zeta}$. 
We start by describing when two such categories are equivalent.

	\begin{theorem}\label{when are two equivalent} Let $\zeta$ and $\xi$ be $n$-th roots of unity. 
		Then 
		\begin{align*}
			\Vect^{\zeta}_{\mathbb Z_n}   \simeq\Vect^{\xi}_{\mathbb Z_n} 
		\end{align*}
		if and only if there exists $j \in \mathbb Z_n^{\times}$ such that $\xi^{j^2}=\zeta$. 
	\end{theorem}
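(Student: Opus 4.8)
The strategy is to read off equivalences from the universal property of Section \ref{section:universal property}: a tensor functor $\Vect_{\mathbb Z_n}^{\zeta}\to\C$ is the same datum as a pair $(X,\lambda)$ with $X\in\C$ and $\lambda\colon X^{\otimes n}\xrightarrow{\sim}\mathbf 1$ obeying $\mathrm{id}\otimes\lambda=\zeta\,\lambda\otimes\mathrm{id}$. I will apply this with $\C=\Vect_{\mathbb Z_n}^{\xi}$ and exploit two facts established earlier. First, the displayed relation says exactly that the constant associated to $X$ equals $\zeta$, and by Lemma \ref{lemma: independent of choice of iso} this constant depends only on $X$, not on $\lambda$. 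Second, by Remark \ref{remark: square in Vec} applied inside $\Vect_{\mathbb Z_n}^{\xi}$, the constant associated to the invertible simple $\delta_j$ with $j\in\mathbb Z_n^{\times}$ is $\xi^{j^2}$.

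For the implication ($\Leftarrow$), assume $j\in\mathbb Z_n^{\times}$ satisfies $\xi^{j^2}=\zeta$. Since $jn\equiv 0\pmod n$ we have $\delta_j^{\otimes n}\simeq\mathbf 1$, so there is an isomorphism $\lambda\colon\delta_j^{\otimes n}\xrightarrow{\sim}\mathbf 1$; because the constant of $\delta_j$ in $\Vect_{\mathbb Z_n}^{\xi}$ is $\xi^{j^2}=\zeta$, this $\lambda$ automatically satisfies $\mathrm{id}\otimes\lambda=\zeta\,\lambda\otimes\mathrm{id}$. The universal property then produces a tensor functor $F_{\delta_j,\lambda}\colon\Vect_{\mathbb Z_n}^{\zeta}\to\Vect_{\mathbb Z_n}^{\xi}$ with $\delta_1\mapsto\delta_j$, hence $\delta_1^{\otimes k}=\delta_k\mapsto\delta_j^{\otimes k}=\delta_{jk}$. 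As $j$ is a unit, $k\mapsto jk$ permutes $\mathbb Z_n$, so $F_{\delta_j,\lambda}$ induces a bijection on isomorphism classes of simple objects; I will upgrade this to an equivalence below.

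For ($\Rightarrow$), let $F\colon\Vect_{\mathbb Z_n}^{\zeta}\xrightarrow{\sim}\Vect_{\mathbb Z_n}^{\xi}$ be a tensor equivalence. By the universal property $F=F_{X,\lambda}$ with $X=F(\delta_1)$ and $\mathrm{id}\otimes\lambda=\zeta\,\lambda\otimes\mathrm{id}$, so the constant associated to $X$ is $\zeta$. Since $\delta_1$ generates $\Vect_{\mathbb Z_n}^{\zeta}$ and $F$ is an equivalence, $X=F(\delta_1)$ is an invertible object generating $\Vect_{\mathbb Z_n}^{\xi}$; thus $X\simeq\delta_j$ for some $j$, and the generating condition $\{\delta_{jk}\}=\{\delta_m\}$ forces $j\in\mathbb Z_n^{\times}$. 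By Remark \ref{remark: square in Vec} the constant of $\delta_j$ is $\xi^{j^2}$, and it equals $\zeta$ by the previous line, whence $\xi^{j^2}=\zeta$, as required.

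The step I expect to demand the most care is promoting $F_{\delta_j,\lambda}$ from essentially surjective to an equivalence in direction ($\Leftarrow$). Tensor functors are faithful by definition, and both categories are semisimple with endomorphism algebra $\mathbf k$ at every simple object; since $F_{\delta_j,\lambda}$ induces a bijection on simples, on each $\Hom$-space between simples it is an injection of equal-dimensional spaces (of dimension $0$ or $1$), hence an isomorphism, yielding fully-faithfulness. The only other point to verify carefully is the identification of the universal-property constraint $\mathrm{id}\otimes\lambda=\zeta\,\lambda\otimes\mathrm{id}$ with \textquotedblleft the constant associated to $X$ equals $\zeta$\textquotedblright, but this is immediate from Diagram \eqref{main diagram} together with the independence statement of Lemma \ref{lemma: independent of choice of iso}.
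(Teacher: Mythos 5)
Your proof is correct and follows essentially the same route as the paper: both reduce the question via the universal property (Proposition \ref{equivalence of categories}) to the existence of a pair $(X,\lambda)$ where $X$ must be an invertible generator, hence $X\simeq\delta_j$ with $j\in\mathbb Z_n^{\times}$, and then invoke Remark \ref{remark: square in Vec} to identify the associated constant of $\delta_j$ with $\xi^{j^2}$. The only difference is that you spell out details the paper leaves implicit, notably the verification in the $(\Leftarrow)$ direction that $F_{\delta_j,\lambda}$ is fully faithful and essentially surjective.
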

	\begin{proof}
	Recall from Proposition \ref{equivalence of categories} that to define a functor $\Vect_{\mathbb Z_n}^{\zeta} \to \Vect_{\mathbb Z_n}^{\xi}$ it is enough to make a choice of an object $X\in \Vect_{\mathbb Z_n}^{\xi}$ and an isomorphism $\lambda: X^{\otimes n}\xrightarrow{\sim} \mathbf 1$ such that $\text{id}\otimes \lambda = \zeta \lambda \otimes \text{id}$. In particular, this implies that $X$ is invertible, hence $X\simeq  \delta_j$ for some $0\leq j < n.$	Moreover, if we want to have an equivalence, the functor should be surjective and thus $X\simeq \delta_j$ should be a generator.
	The statement then follows from Remark \ref{remark: square in Vec}.
\end{proof}

Now that we have a way of measuring when two cyclic pointed fusion associated to $\mathbb Z_n$  are equivalent, we give an answer for the number of equivalence classes for a fixed $n$. 

	\begin{theorem}
			Let $n \in \mathbb N$ and let $ p_1, \cdots, p_l$ be distinct odd primes such that $n=2^{k_0}p_1^{k_1}\cdots p_l^{k_l}$ for some $k_0, \cdots, k_l \in \mathbb N$. There are $c(n)$ categories of the form $\Vect^{\zeta}_{\mathbb Z_n}$ up to equivalence, where 
			\begin{align*}
			c(n) = \begin{cases} \prod\limits_{i=1}^k (2k_i+1) & \text{if}\  k_0=0,  \\ 2\prod\limits_{i=1}^k (2k_i+1) & \text{if}\ k_0=1,  \\ 
				4\prod\limits_{i=1}^k (2k_i+1) & \text{if}\ k_0=2,  \\ 
				4(k_0-1)\prod\limits_{i=1}^k (2k_i+1) & \text{if}\ k_0\geq 3.   \end{cases} 
		\end{align*}
	\end{theorem}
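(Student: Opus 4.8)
The plan is to reduce the counting problem to an orbit count and then evaluate it with the Chinese Remainder Theorem. By Theorem \ref{when are two equivalent}, two categories $\Vect^{\zeta}_{\mathbb Z_n}$ and $\Vect^{\xi}_{\mathbb Z_n}$ are equivalent exactly when $\xi^{j^2}=\zeta$ for some $j\in\mathbb Z_n^{\times}$. After fixing a primitive $n$-th root of unity $\omega$ and writing $\zeta=\omega^a$, $\xi=\omega^b$ with $a,b\in\mathbb Z_n$, this relation becomes $a\equiv j^2 b \pmod n$. Since the set of squares $S:=(\mathbb Z_n^{\times})^2$ is a subgroup of $\mathbb Z_n^{\times}$, this is precisely the orbit equivalence for the multiplicative action of $S$ on $\mathbb Z_n$ (reflexivity, symmetry, and transitivity follow from $S$ being a group). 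So the first step is to record that $c(n)$ equals the number of $S$-orbits on $\mathbb Z_n$.

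Next I would stratify $\mathbb Z_n$ by the invariant $\gcd(a,n)$, which is unchanged under multiplication by a unit. For each divisor $e\mid n$, set $d=n/e$; then the stratum $\{a:\gcd(a,n)=e\}$ is in bijection with $\mathbb Z_d^{\times}$ via $a=eb$, and under this bijection the $S$-action becomes multiplication by the image of $S$ in $\mathbb Z_d^{\times}$. The key observation is that the reduction homomorphism $\mathbb Z_n^{\times}\twoheadrightarrow \mathbb Z_d^{\times}$ is surjective, hence carries $S$ onto $(\mathbb Z_d^{\times})^2$; therefore the orbits on this stratum are exactly the cosets of $(\mathbb Z_d^{\times})^2$ in $\mathbb Z_d^{\times}$, of which there are $[\mathbb Z_d^{\times}:(\mathbb Z_d^{\times})^2]$. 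Summing over strata and reindexing $d=n/e$ yields
\[
c(n)=\sum_{d\mid n}\,[\mathbb Z_d^{\times}:(\mathbb Z_d^{\times})^2].
\]

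To evaluate each index, I would use that for a finite abelian group $A$ one has $[A:A^2]=|A[2]|$, so $[\mathbb Z_d^{\times}:(\mathbb Z_d^{\times})^2]$ equals the number $N(d)$ of solutions of $x^2\equiv 1\pmod d$. By the Chinese Remainder Theorem $N$ is multiplicative, with $N(p^c)=2$ for odd $p$ and $c\geq 1$, and $N(2^a)=1,1,2,4$ for $a=0,1,2$ and $a\geq 3$ respectively. Since $N$ is multiplicative, the divisor sum factors over prime powers as
\[
c(n)=\Big(\sum_{a=0}^{k_0}N(2^a)\Big)\prod_{i=1}^{l}\Big(\sum_{c=0}^{k_i}N(p_i^{c})\Big).
\]
Each odd factor equals $1+2k_i=2k_i+1$, and the $2$-factor equals $1,\,2,\,4,\,4(k_0-1)$ in the four cases $k_0=0,\,1,\,2$ and $k_0\geq 3$ (using $1+1+2+4(k_0-2)=4(k_0-1)$), which gives the stated value of $c(n)$.

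The main obstacle I expect is the second paragraph: one must check carefully that multiplication by $S$ preserves the $\gcd$-stratification, that the bijection of each stratum with $\mathbb Z_d^{\times}$ intertwines the two actions, and above all that the image of $S$ under reduction mod $d$ is exactly $(\mathbb Z_d^{\times})^2$ and not something larger. Everything afterward is the routine arithmetic of counting square roots of unity and summing a multiplicative function over divisors.
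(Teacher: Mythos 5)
Your proof is correct, and it shares the paper's starting point --- by Theorem \ref{when are two equivalent} the count reduces to counting orbits of multiplication by squares of units on $\mathbb{Z}_n$ --- but from there your organization is genuinely different. The paper first restricts to prime powers $n=p^k$, stratifies $\mathbb{Z}_{p^k}$ by $p$-adic valuation, identifies the orbits in each stratum with cosets of the quadratic residues in $\mathbb{Z}_{p^{k-i}}^{\times}$, computes those indices directly (which implicitly uses the structure of the unit groups $\mathbb{Z}_{p^k}^{\times}$), and only then assembles general $n$ by applying the Chinese Remainder Theorem to the action itself, so that the orbit set factors as a product over prime powers. You instead stratify all of $\mathbb{Z}_n$ at once by $\gcd(a,n)$, use surjectivity of the reduction $\mathbb{Z}_n^{\times}\twoheadrightarrow\mathbb{Z}_d^{\times}$ (which indeed carries $(\mathbb{Z}_n^{\times})^2$ onto $(\mathbb{Z}_d^{\times})^2$, since the image of a surjective homomorphism applied to the subgroup of squares is the subgroup of squares) to see the orbits in each stratum as cosets, and arrive at the closed-form divisor sum
\begin{align*}
c(n)=\sum_{d\mid n}\,[\mathbb{Z}_d^{\times}:(\mathbb{Z}_d^{\times})^2]=\sum_{d\mid n}\#\{x \bmod d : x^2\equiv 1\},
\end{align*}
which you then evaluate by multiplicativity of the divisor sum. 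What your route buys: the identity $[A:A^2]=|A[2]|$ replaces the quadratic-residue index computation with the elementary count of square roots of $1$ modulo prime powers, the intermediate divisor-sum formula is a clean statement of independent interest, and it explains directly the paper's remark identifying $c(n)$ with OEIS sequence A092089. What the paper's route buys: the prime-power-first reduction keeps each orbit computation concrete and avoids needing the (standard, but worth stating) surjectivity of $\mathbb{Z}_n^{\times}\to\mathbb{Z}_d^{\times}$, which you correctly flag as the one point requiring care. Both arguments are complete and yield the same case analysis at $p=2$.
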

	\begin{proof}
Let $\theta$ be a primitive $n$-th root of unity. Then any pointed fusion category associated to $\mathbb Z_n$ is of the form $\Vect_{\mathbb Z_n}^{\theta^a}$ for some $a\in \mathbb N$. By Corollary \ref{when are two equivalent}, we know that any two pointed categories $\Vect_{\mathbb Z_n}^{\theta^a}$ and $\Vect_{\mathbb Z_n}^{\theta^b}$ are equivalent if and only if there exists $j\in \mathbb Z_n^{\times}$ such that $\theta^{aj^2}=\theta^b$, or equivalentely, such that $j^2a\equiv b \mod n$.  Consider the action of $\mathbb Z_n^{\times}$ in $\mathbb Z_n$  given by
\begin{align*}
	\mathbb Z_{n}^{\times} &\to \End(\mathbb Z_{n})\\
	j &\mapsto (a \mapsto j^2a).
\end{align*}
That is, the action of left multiplication by squares. 
Then counting equivalence classes of pointed fusion categories associated to $\mathbb Z_n$ reduces to computing the number of orbits of this action. The rest of the proof is dedicated to this computation.

Assume for now that $n=p^k$ for some prime $p$ and $k\in \mathbb N$, and let $x,y\in \mathbb Z_{p^k}$. Then one can get from $x$ to $y$ multiplying by a unit if and only if $x$ and $y$ are divisible by the same power of $p$. So if we are looking at the action of multiplication by squares of units, it is enough to compute the orbits of the action restricted to $\mathcal H_i:=\{ x \in \mathbb Z_{p^k} / x \text{ is divisible by } p^i \text{ but not }  p^{i+1}\}$ for all $i=0, \dots, k-1$.

 Let $x,y\in \mathcal H_i$. Then there exist $x_1, y_1\in \mathbb Z_{p^{k-i}}^{\times}$ such that $x=p^ix_1$ and $y=p^iy_1$. So $x,y$ are in the same orbit if and only if there exists $j\in \mathbb Z_{p^k}^{\times}$ such that 
 \begin{align}
 	p^ix_1 \equiv j^2  p^i y_1\mod{p^k}.
 \end{align}
 That is, 
 \begin{align*}
 	x_1\equiv j^2 y_1 \mod{p^{k-i}}
 \end{align*}
 which is equivalent to
 \begin{align*}
 	y_1y_2^{-1} \equiv j^2 \mod{p^{k-i}}.
 \end{align*}
So if $\mathcal R_{k-i}$ denotes the subgroup of quadratic residues of $\mathbb Z_{p^{k-i}}$, the number of orbits of the action restricted to $\mathcal H_i$ is exactly
 \begin{align*}
 	\Big| \mathbb Z_{p^{k-i}} / \mathcal R_{k-i}\Big|
 \end{align*}
 for all $0\leq i <k.$  When $p$ is odd, $\Big| \mathbb Z_{p^{k-i}} / \mathcal R_{k-i}\Big|=2$ for all $0\leq i<k$, and this together with the fact that $\mathcal H_k=\{0\}$ gives a total number of $2k+1$ orbits. 
	If $p=2,$ then 
		\begin{align*}
			\Big| \mathbb Z_{p^{k-i}} / \mathcal R_{k-i}\Big| = \begin{cases} 1 & \text{if}\  i=k-1  \\ 2 & \text{if}\ i=k-2\\ 4 & \text{if}\ 0\leq i\leq k- 3 .  \end{cases} 
		\end{align*}
		Hence if $k=1$ the action has exactly two orbits, if $k=2$ the action has four orbits and if $k\geq 3$ the action has $4(k-1)$ orbits.

		Lastly, note that for the general case $n=2^{k_0}p_1^{k_1}\cdots p_l^{k_l}$ we have identifications 
				\begin{align*}
			\mathbb Z_m^{\times} \simeq \mathbb Z_{2^{k_0}}^{\times}\oplus \mathbb Z_{p_1^{k_1}}^{\times}\oplus \cdots \oplus \mathbb Z_{p_l^{k_l}}^{\times}
		\end{align*}
		and 
				\begin{align*}
		\End(	\mathbb Z_m) \simeq \End(\mathbb Z_{2^{k_0}})\oplus \End(\mathbb Z_{p_1^{k_1}})\oplus \cdots \oplus \End(\mathbb Z_{p_l^{k_l}}).
		\end{align*}
		Thus if we consider the action of multiplication by a square, the orbit of an element $x=(x_0, \dots, x_l)$ in $\mathbb Z_m$ is identified with the product of the orbits of each $x_i\in \mathbb Z_{p_i^{n_i}}$.  
Hence the number of orbits of the action on $\mathbb Z_m$ is the product of the number of orbits restricted to each of the terms $\mathbb Z_{p_i^{n_i}}$ which we computed previously, and the result follows.
	\end{proof}
	
\begin{remark}
	The first 10 terms of the sequence $c(n)$ are $1,2,3,4,3,6,3,8,5,6$. See sequence A092089 at \href{https://oeis.org/A092089}{OEIS} for more information.
\end{remark}
	
We now wish to give a description of the 2-group $\Aut_{\otimes}(\Vect_{\mathbb Z_n}^{\zeta})$, with objects monoidal automorphisms of $\Vect_{\mathbb Z_n}^{\zeta}$ and morphisms their monoidal natural isomorphisms. 

	It follows from the previous results in this section that monoidal automorphisms of $\Vect_{\mathbb Z_n}^{\zeta}$ are of the form $F_{X, \lambda}$, where $X\simeq \delta_j$ for some $j\in \mathbb Z_n^{\times}$ such that $\zeta^{j^2}=\zeta$, $\lambda$ is any isomorphism $\delta_j^{\otimes n} \xrightarrow{\sim} \mathbf 1$, and $F_{X, \lambda}$ is the tensor functor obtained from mapping $\delta_1\mapsto X$ and the canonical isomorphism $\delta_1^{\otimes n}\xrightarrow{\sim }\mathbf 1$ to $\lambda$, see Section \ref{section:universal property}. 

	
	

For the rest of this section, let	$X\simeq \delta_j$ and $Y\simeq \delta_k$ in $\Vect_{\mathbb Z_n}^{\zeta},$ for $j,k\in \mathbb Z_n^{\times}$ satisfying $\zeta^{j^2}=\zeta=\zeta^{k^2}$, and let $F_{X,\lambda}, F_{Y,\lambda'}$ be two automorphisms $\Vect_{\mathbb Z_n}^{\zeta}\to \Vect_{\mathbb Z_n}^{\zeta}$.
	
	\begin{proposition}
	 If $j\ne  k$, then $\Hom_{\Aut_{\otimes}(\Vect_{\mathbb Z_n}^{\zeta})}(F_{X,\lambda}, F_{Y,\lambda'}) =0.$
	\end{proposition}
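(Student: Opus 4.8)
The plan is to reduce this Hom computation to the vanishing of morphisms between distinct simple objects. A morphism in $\Aut_{\otimes}(\Vect_{\mathbb Z_n}^{\zeta})$ from $F_{X,\lambda}$ to $F_{Y,\lambda'}$ is by definition a monoidal natural isomorphism $\tau: F_{X,\lambda} \Rightarrow F_{Y,\lambda'}$. First I would evaluate $\tau$ at the generator $\delta_1$: since $F_{X,\lambda}(\delta_1)=X$ and $F_{Y,\lambda'}(\delta_1)=Y$, the component $\tau_{\delta_1}$ is a morphism $X \to Y$, and because the components of a monoidal natural isomorphism are themselves isomorphisms, $\tau_{\delta_1}: X \xrightarrow{\sim} Y$. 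Thus the mere existence of a morphism in this Hom set forces $X \simeq Y$ in $\Vect_{\mathbb Z_n}^{\zeta}$.

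Next I would use that $X \simeq \delta_j$ and $Y \simeq \delta_k$ are simple (invertible) objects, and that $\delta_j \simeq \delta_k$ if and only if $j = k$ in $\mathbb Z_n$. Since by hypothesis $j \ne k$, the objects $\delta_j$ and $\delta_k$ are non-isomorphic simples, so $\Hom_{\Vect_{\mathbb Z_n}^{\zeta}}(X,Y)=0$ and in particular no isomorphism $X \to Y$ can exist. This contradicts the existence of $\tau_{\delta_1}$, so no such $\tau$ exists and the Hom set is empty, which is precisely what the symbol $0$ records in the statement.

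Alternatively, and perhaps more cleanly, I would pass through Proposition \ref{equivalence of categories}, which identifies $\Hom(F_{X,\lambda}, F_{Y,\lambda'})$ with $\Hom_{\mathcal C_{\zeta}}((X,\lambda),(Y,\lambda'))$, the set of morphisms $f: X \to Y$ in $\Vect_{\mathbb Z_n}^{\zeta}$ satisfying $\lambda' f^{\otimes n} = \lambda$. When $j \ne k$ we again have $\Hom_{\Vect_{\mathbb Z_n}^{\zeta}}(X,Y)=0$, so the only candidate is $f = 0$; but then $\lambda' f^{\otimes n} = 0 \ne \lambda$ because $\lambda$ is an isomorphism, hence nonzero. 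Therefore no admissible $f$ exists and the set is empty.

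I do not expect a serious obstacle here: the entire content is that a monoidal natural transformation between these functors is determined by its value on the tensor-generator $\delta_1$, combined with the elementary vanishing of $\Hom$ between distinct simple invertible objects. The only points deserving a word of care are the interpretation of $0$ as an empty Hom set (rather than a one-element group), and the remark that $f=0$ fails the normalization $\lambda' f^{\otimes n} = \lambda$ exactly because $\lambda$ is invertible.
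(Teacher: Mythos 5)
Your proposal is correct and follows essentially the same route as the paper: the paper's proof also reduces a morphism $F_{X,\lambda}\to F_{Y,\lambda'}$ to its component $\tau_{\delta_1}\colon X\to Y$ and observes that, since $\delta_j\not\simeq\delta_k$ for $j\ne k$, the only possible choice is zero. Your added remarks (that components of a monoidal natural isomorphism must be isomorphisms, and that $f=0$ fails the normalization $\lambda' f^{\otimes n}=\lambda$) just make explicit what the paper leaves implicit.
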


	\begin{proof}
		To define a natural transformation $F_{j,\lambda}\to F_{k,\lambda'}$ we need to choose a map $\tau_{\delta_1}: X\to Y$ in $\Vect_{\mathbb Z_n}^{\zeta}$. If $j\not \equiv k \mod n,$ the only possible choice is zero. 
	\end{proof}

	\begin{proposition}
	If $j = k$, then $\Hom_{\Aut_{\otimes}(\Vect_{\mathbb Z_n}^{\zeta})}(F_{X,\lambda}, F_{Y,\lambda'}) \simeq \Theta$ as sets, where $\Theta$ denotes the set of $n$-th roots of unity. 
	\end{proposition}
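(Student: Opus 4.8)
The plan is to leverage the equivalence $\Phi$ of Proposition \ref{equivalence of categories}, specialized to $\C=\Vect_{\mathbb Z_n}^{\zeta}$. Since $\Aut_{\otimes}(\Vect_{\mathbb Z_n}^{\zeta})$ is the subcategory of $\operatorname{Fun}_{\otimes}(\Vect_{\mathbb Z_n}^{\zeta},\Vect_{\mathbb Z_n}^{\zeta})$ whose morphisms are the monoidal natural \emph{isomorphisms}, $\Phi$ identifies $\Hom_{\Aut_{\otimes}(\Vect_{\mathbb Z_n}^{\zeta})}(F_{X,\lambda},F_{Y,\lambda'})$ with the set of \emph{invertible} morphisms $(X,\lambda)\to (Y,\lambda')$ in $\C_{\zeta}$. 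By definition of $\C_{\zeta}$, such a morphism is an isomorphism $f:X\xrightarrow{\sim} Y$ in $\Vect_{\mathbb Z_n}^{\zeta}$ satisfying the single relation $\lambda' f^{\otimes n}=\lambda$. So the whole problem reduces to counting such $f$.

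First I would use that $j=k$. Since $X\simeq \delta_j$ and $Y\simeq \delta_k$ with $j=k$, the space $\Hom_{\Vect_{\mathbb Z_n}^{\zeta}}(X,Y)\simeq \Hom(\delta_j,\delta_j)\simeq \mathbf k$ is one-dimensional; I fix an isomorphism $\phi:X\xrightarrow{\sim} Y$ and write an arbitrary morphism as $f=c\,\phi$ for $c\in\mathbf k$. Then $f^{\otimes n}=c^{n}\phi^{\otimes n}$, and the constraint becomes $c^{n}\,(\lambda'\phi^{\otimes n})=\lambda$. Now $X^{\otimes n}\simeq \delta_j^{\otimes n}\simeq \mathbf 1$, so $\Hom(X^{\otimes n},\mathbf 1)\simeq \mathbf k$ is again one-dimensional; hence the two isomorphisms $\lambda$ and $\lambda'\phi^{\otimes n}$ from $X^{\otimes n}$ to $\mathbf 1$ differ by a fixed nonzero scalar $\gamma\in\mathbf k^{\times}$, i.e. $\lambda=\gamma\,(\lambda'\phi^{\otimes n})$. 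The constraint on $f$ thus collapses to the scalar equation $c^{n}=\gamma$.

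Finally I would count the solutions. For fixed $\gamma\neq 0$, the equation $c^{n}=\gamma$ has exactly $n$ solutions in $\mathbf k$ (as $\mathbf k$ is algebraically closed), all of them nonzero, so every solution $f=c\,\phi$ is automatically an isomorphism and no separate invertibility check is needed. The solution set is a torsor under multiplication by $\Theta$: if $c_0$ is one solution, the full set is $\{\eta c_0:\eta\in\Theta\}$, giving a (non-canonical) bijection with $\Theta$ as sets, which is precisely the claim. The main point to verify carefully — and the only genuinely non-formal step — is the reduction to the single equation $\lambda' f^{\otimes n}=\lambda$: that monoidal naturality imposes exactly this condition and nothing more is the content of $\Phi$ being an equivalence, so once Proposition \ref{equivalence of categories} is invoked the remainder is elementary root-counting.
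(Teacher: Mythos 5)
Your proof is correct and follows essentially the same route as the paper: both invoke Proposition \ref{equivalence of categories} to reduce morphisms $F_{X,\lambda}\to F_{Y,\lambda'}$ to maps $f:X\to Y$ with $\lambda' f^{\otimes n}=\lambda$, fix a reference isomorphism, use one-dimensionality of the relevant Hom spaces to collapse the condition to a scalar equation $c^{n}=\gamma$, and count the $n$ roots. Your version is, if anything, slightly tidier in making the torsor structure under $\Theta$ explicit and in noting that invertibility of $f$ is automatic, a point the paper leaves implicit.
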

	
	\begin{proof}
Since $j =k$, we have that $X\simeq Y$. Fix an isomorphism $\Gamma:X\xrightarrow{\sim} Y$. 
By Proposition \ref{equivalence of categories}, defining a monoidal transformation $\tau: F_{X,\lambda}\xrightarrow{\sim}F_{Y,\lambda'}$ is equivalent to choosing a map $\tau_{\delta_1}:X \to Y$ such that $\lambda ' \tau_{\delta_1}^{\otimes n}=\lambda$. Now, since $\lambda, \lambda'\Gamma^{\otimes n} \in \Hom(X^{\otimes n}, \mathbf 1)\simeq \mathbf k,$ there exists $\alpha\in \mathbf k^{\times}$ such that $\lambda = \alpha \lambda'\Gamma^{\otimes n}$. Hence if $\lambda' \tau_{\delta_1}^{\otimes n}=\lambda$, we get that
\begin{align*}
	\lambda' \tau_{\delta_1}^{\otimes n}=\alpha \lambda' \Gamma^{\otimes n},
\end{align*}
and so
\begin{align}\label{eq:tensor product}
	\tau_{\delta_1}^{\otimes n}=\alpha \Gamma^{\otimes n}.
\end{align}
 Since $\Gamma^{-1}\tau_{\delta_1}\in \Hom(X,X)\simeq \mathbf k$, then $\Gamma^{-1}\tau_{\delta_1}$ is a multiple of the identity, and so it follows from Equation \eqref{eq:tensor product} that $$\tau_{\delta_1}=\alpha_n\Gamma,$$ where $\alpha_n$ is some $n$-th root of $\alpha$.

	Note that this does not depend on the choice of isomorphism $\Gamma:X\to Y$. In fact, if $\Gamma':X\to Y$ is another isomorphism, then since $\Hom(X,Y)\simeq \mathbf k$ there exists $a\in \mathbf k'$ such that $\Gamma=a\Gamma'$. Following the same computations for $\Gamma'$, we get that $\tau_{\delta_1}=\beta\Gamma'$ for $\beta$ and $n$-th root of $\alpha a^n$, and so again $\tau_{\delta_1}=\alpha_n \Gamma$ for $\alpha_n$ an $n$-th root of $\alpha$.

Since we now know that our computations do not depend on the choice of $\Gamma$, we may choose $\Gamma$ so that $\lambda= \lambda' \Gamma^{\otimes n}.$ Hence 
\begin{align*}
	\tau_{\delta_1}=\theta_n^k \Gamma, \text{ for } k=0,1 \dots, n-1,
\end{align*}
are all the possible choices for $\tau_{\delta_1}$, where $\theta$ is a primitive $n$-th root of unity.
	\end{proof}

	\begin{remark}
		It follows from the proof that $\Aut_{\otimes}(F_{X,\lambda})\simeq \mathbb Z_n$ as groups.
	\end{remark}

	We summarize the results of this section in the following theorem. 
	
	\begin{theorem}\label{maintheorem}
	The 2-group $\Aut_{\otimes}(\Vect_{\mathbb Z_n}^{\zeta})$ has:
\begin{enumerate}
	\item Objects: monoidal automorphisms $F_{X,\lambda}:\Vect_{\mathbb Z_n}^{\zeta}\xrightarrow{\sim}  \Vect_{\mathbb Z_n}^{\zeta}$, where 
	\begin{itemize}
		\item $X\simeq \delta_j$ for some $j\in \mathbb Z_n^{\times}$ such that $\zeta^{j^2}=\zeta$,
		\item $\lambda$ is an isomorphism $\delta_j^{\otimes n} \xrightarrow{\sim} \mathbf 1,$				
		\item $F_{X, \lambda}$ is the tensor functor obtained from mapping the generator  $\delta_1\mapsto X$ and the canonical isomorphism $\delta_1^{\otimes n}\xrightarrow{\sim }\mathbf 1$ to $\lambda$.
	\end{itemize}
	\item Morphisms: Given two automorphisms $F_{X,\lambda}, F_{Y,\lambda'}: \Vect_{\mathbb Z_n}^{\zeta}\xrightarrow{\sim} \Vect_{\mathbb Z_n}^{\zeta}$, where $X\simeq \delta_j$ and $Y\simeq \delta_k$ for some $j,k\in \mathbb Z_n^{\times}$, we have that
	\begin{align*}
		\Hom_{\Aut_{\otimes}(\Vect_{\mathbb Z_n}^{\zeta})}(F_{X,\lambda}, F_{Y,\lambda'})\simeq
		\begin{cases}
			\Theta &\text{if } j= k,\\
			0 &\text{if } j \ne k,
		\end{cases}
	\end{align*}
	where $\Theta$ denotes the set of $n$-th roots of unity. Moreover, 
	\begin{align*}
		\Aut_{\Aut_{\otimes}(\Vect_{\mathbb Z_n}^{\zeta})}(F_{X,\lambda})\simeq \mathbb Z_n,
	\end{align*}
	for all $F_{X,\lambda}\in \Aut_{\otimes}(\Vect_{\mathbb Z_n}^{\zeta})$.
\end{enumerate}
	\end{theorem}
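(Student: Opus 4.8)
The plan is to assemble the propositions and the remark just established in this section into the three claimed pieces of data; almost all of the real content already lives in earlier results, so the proof is largely a matter of organizing them.

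For the objects, I would start from the universal property. By Proposition \ref{equivalence of categories}, every tensor functor $\Vect_{\mathbb Z_n}^{\zeta}\to\Vect_{\mathbb Z_n}^{\zeta}$ is of the form $F_{X,\lambda}$ for a pair $(X,\lambda)$ with $\lambda:X^{\otimes n}\xrightarrow{\sim}\mathbf 1$ satisfying $\text{id}\otimes\lambda=\zeta\,\lambda\otimes\text{id}$. An \emph{auto}equivalence must in particular be essentially surjective, which forces $X$ to be a generator, hence $X\simeq\delta_j$ for some $j\in\mathbb Z_n^{\times}$. Applying Theorem \ref{when are two equivalent} with $\xi=\zeta$ then yields exactly the constraint $\zeta^{j^2}=\zeta$, and conversely any pair $(X,\lambda)$ of this shape produces an equivalence by the same theorem. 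This shows the objects are precisely the $F_{X,\lambda}$ described.

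For the morphisms, I would invoke the two preceding propositions directly: the case $j\ne k$ gives the zero Hom-space, since the only available map $\tau_{\delta_1}:\delta_j\to\delta_k$ in $\Vect_{\mathbb Z_n}^{\zeta}$ is zero, while the case $j=k$ identifies the Hom-space with the set $\Theta$ of $n$-th roots of unity via the parametrization $\tau_{\delta_1}=\theta_n^{m}\Gamma$. The final statement is the special case $X=Y$, $\lambda=\lambda'$: choosing $\Gamma$ so that $\lambda=\lambda'\Gamma^{\otimes n}$ we may take $\alpha=1$, so the self-transformations of $F_{X,\lambda}$ are again indexed by the $n$-th roots of unity.

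The one point that needs genuine (if modest) care, and which I expect to be the main obstacle, is upgrading this last set-level bijection to the asserted group isomorphism $\Aut_{\Aut_{\otimes}(\Vect_{\mathbb Z_n}^{\zeta})}(F_{X,\lambda})\simeq\mathbb Z_n$. Concretely, I would check that if $\tau,\sigma$ correspond to roots $\theta_n^{a},\theta_n^{b}$ then the vertical composite $\sigma\tau$ corresponds to $\theta_n^{a+b}$; this reduces to verifying $(\sigma\tau)_{\delta_1}=\sigma_{\delta_1}\tau_{\delta_1}$ under the identification $\Hom(\delta_j,\delta_j)\simeq\mathbf k$, which follows from the definition of composition of monoidal natural transformations together with the fact that composition in $\Vect_{\mathbb Z_n}^{\zeta}$ restricts to multiplication of scalars on each one-dimensional endomorphism space. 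Once this is in hand, the map $\theta_n^{a}\mapsto a$ furnishes the isomorphism onto $\mathbb Z_n$, completing the summary.
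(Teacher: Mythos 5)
Your proposal is correct and takes essentially the same route as the paper, which presents this theorem purely as a summary of the section's earlier results (the universal property and Theorem \ref{when are two equivalent} for the objects, and the two preceding Hom-space propositions for the morphisms). Your extra check that vertical composition corresponds to multiplication of $n$-th roots of unity under the identification $\Hom(\delta_j,\delta_j)\simeq \mathbf k$ supplies exactly the detail the paper leaves implicit in its remark that the group isomorphism $\Aut_{\Aut_{\otimes}(\Vect_{\mathbb Z_n}^{\zeta})}(F_{X,\lambda})\simeq \mathbb Z_n$ ``follows from the proof.''
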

	
\begin{corollary}
	The $2$-group $\Aut_{\otimes}(\Vect_{\mathbb Z_n}^{\zeta})$ has the following invariants:
	\begin{enumerate}
		\item $\pi_0(\Aut_{\otimes}(\Vect_{\mathbb Z_n}^{\zeta}))=\{j\in \mathbb Z_n / \zeta^{j^2}=\zeta \}< \mathbb Z_n$, and
		\item $\pi_1(\Aut_{\otimes}(\Vect_{\mathbb Z_n}^{\zeta})) \simeq \mathbb Z_n$. 
	\end{enumerate}
\end{corollary}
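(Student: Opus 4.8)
The plan is to deduce both invariants directly from Theorem~\ref{maintheorem} by unwinding the definitions of $\pi_0$ and $\pi_1$ recalled in Section~\ref{section:preliminaries}: $\pi_0$ is the group of isomorphism classes of objects of the $2$-group, and $\pi_1$ is the automorphism group of its unit object.

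For part (1), I would first invoke the objects part of Theorem~\ref{maintheorem}: every object of $\Aut_{\otimes}(\Vect_{\mathbb Z_n}^{\zeta})$ is an autoequivalence $F_{X,\lambda}$ with $X\simeq \delta_j$ for some $j\in\mathbb Z_n^{\times}$ satisfying $\zeta^{j^2}=\zeta$. Since every morphism in a $2$-group is an isomorphism, the morphisms part of the theorem says that $F_{X,\lambda}$ and $F_{Y,\lambda'}$ are isomorphic exactly when $j=k$; hence the isomorphism classes are in bijection with $\{j\in\mathbb Z_n^{\times} : \zeta^{j^2}=\zeta\}$. To pin down the group law I would use that the tensor product on $\Aut_{\otimes}$ is composition of functors: composing $F_{X,\lambda}$, which sends $\delta_1\mapsto\delta_j$, with $F_{Y,\lambda'}$, which sends $\delta_1\mapsto\delta_k$, produces a functor sending $\delta_1\mapsto\delta_{jk}$, so the operation induced on the index is multiplication in $\mathbb Z_n^{\times}$. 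Finally I would check closure: writing $d$ for the order of $\zeta$, which divides $n$, the condition $\zeta^{j^2}=\zeta$ reads $j^2\equiv 1\pmod d$, exhibiting the set as the preimage of the $2$-torsion subgroup of $\mathbb Z_d^{\times}$ under the reduction $\mathbb Z_n^{\times}\to\mathbb Z_d^{\times}$, hence a subgroup.

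For part (2), I would identify the unit object of $\Aut_{\otimes}(\Vect_{\mathbb Z_n}^{\zeta})$ with the identity functor, which is $F_{\delta_1,\lambda}$ for $\lambda$ the canonical isomorphism $\delta_1^{\otimes n}\xrightarrow{\sim}\mathbf 1$ (the case $j=1$). By definition $\pi_1=\Aut_{\Aut_{\otimes}(\Vect_{\mathbb Z_n}^{\zeta})}(\mathrm{Id})$, and the ``moreover'' clause of Theorem~\ref{maintheorem} gives $\Aut_{\Aut_{\otimes}(\Vect_{\mathbb Z_n}^{\zeta})}(F_{X,\lambda})\simeq\mathbb Z_n$ for every object; specializing to the identity yields $\pi_1\simeq\mathbb Z_n$.

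The argument is essentially bookkeeping on top of Theorem~\ref{maintheorem}, so I do not expect a serious obstacle. The one step deserving care is the identification of the group law on $\pi_0$: one must verify that composing autoequivalences multiplies the indices $j$ rather than adding them, which follows from $F_{Y,\lambda'}(\delta_j)=F_{Y,\lambda'}(\delta_1^{\otimes j})\simeq\delta_k^{\otimes j}\simeq\delta_{jk}$. This is the computation I would write out most explicitly.
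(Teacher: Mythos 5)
Your proposal is correct and takes the same route as the paper, which states this corollary without proof as an immediate consequence of Theorem \ref{maintheorem}: part (1) reads off isomorphism classes from the objects and morphisms parts of that theorem, and part (2) specializes the ``moreover'' clause to the identity functor $F_{\delta_1,\lambda_{\mathrm{can}}}$. Your extra bookkeeping is in fact a welcome sharpening: by verifying that composition multiplies the indices ($F_{Y,\lambda'}(\delta_j)\simeq \delta_k^{\otimes j}\simeq \delta_{jk}$), you make explicit that the indexing set must be taken inside $\mathbb Z_n^{\times}$ and that $\pi_0$ is a subgroup of the multiplicative group $\mathbb Z_n^{\times}$ (the preimage of the $2$-torsion of $\mathbb Z_d^{\times}$, $d$ the order of $\zeta$), which is more precise than the paper's notation $\{j\in\mathbb Z_n \,/\, \zeta^{j^2}=\zeta\}<\mathbb Z_n$.
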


	   \color{black}
	
	\color{black}
	\bibliographystyle{alpha}
	\bibliography{document} 
	
\end{document}